\newtheorem{theorem}{Theorem}[section]
\newtheorem{lemma}{Lemma}[section]
\newtheorem{corollary}{Corollary}[section]
\newtheorem{conjecture}{Conjecture}[section]
\title{On the Locating Chromatic Number of Trees}
\author
{Yusuf Hafidh and Edy Tri Baskoro\\
	\\
	\normalsize{Combinatorial Mathematics Research Group}\\
	\normalsize{Faculty of Mathematics and Natural Sciences}\\
	\normalsize{Institut Teknologi Bandung}\\
	\normalsize{Jalan Ganesa 10 Bandung, Indonesia}\\
	\\
	\normalsize{Emails: yusufhafidh@math.itb.ac.id and ebaskoro@math.itb.ac.id}\\
}
\date{}
\begin{document} 
	
	\baselineskip16pt
	
	\maketitle 
	
\begin{abstract}
	Some coloring algorithms gives an upper bound for the locating chromatic number of trees with all the vertices not in an end-path colored by only two colors. That means, a better coloring algorithm could be achieved by optimizing the number of colors used in the end-paths. We provide an estimation of the locating chromatic number of trees using the locating chromatic number of its end-palms. We also study the locating chromatic number of palms, a subdivision of star. We also prove $\chi_L(S_n(k))=\Theta(n^{1/k})$; $\chi_L(S_n(3))=(1+o(1))\sqrt[3]{4n}$; and $\chi_L(O_n)=\left\lceil\log_3\left(\frac{n}{4}\right)\right\rceil+3$.
\end{abstract}

Keywords: locating chromatic number, tree, spider.

\section{Introduction}
Let $G=(V,E)$ be a simple connected graph. For any $u \in V$ and $S \subseteq V$, the distance from vertex $u$ to $S$ is defined by $d(u,S)=\min\{d(u,v) \mid v\in S\}$. A set of vertices $S$ {\em resolves} two vertices $u$ and $v$ if $d(u,S)\ne d(v,S)$. Let $c:V\to\{1,2,\cdots, k\}$ be a $k$-coloring of $G$ and $c^{-1}(i)=\{v\in V \mid c(v)=i\}$. A coloring $c$ is called a locating $k$-coloring (or simply a locating coloring) if for every two vertices, there exists a color class $c^{-1}(i)$ that resolves them. The \textit{locating chromatic number} of $G$, denoted by $\chi_L(G)$, is the smallest integer $k$ such that $G$ has a locating $k$-coloring.
The \textit{color code} of a vertex $v$ with respect to $c$ is given by $r_c(v)=\left(d(v,c^{-1}(1)),d(v,c^{-1}(2)),\cdots,d(v,c^{-1}(k))\right)$. The locating chromatic number is also called the metric chromatic number in \cite{MCN09}.

There has been some coloring algorithms that gives an upper bound for the locating chromatic number of trees, see \cite{ASB2019,DIDP,CHR02,DAM19}.
In almost all of these algorithms (\cite{DIDP}, \cite{CHR02}, and \cite{DAM19}), all the vertices which is not in an end-path (a path joining a leaf to its nearest branch) is colored by using only two colors.
One graph in particular having the locating chromatic number far from all the known upper bound is an olive $O_n$. From the algorithms in \cite{CHR02} and \cite{DAM19}, we have $\chi_L(O_n)\leq n+1$; and from \cite{DIDP}, we have $\chi_L(O_n)\leq \lceil\sqrt{n}\rceil+1$. The exact value for $\chi_L(O_n)$ is $\left\lceil\log_3\left(\frac{n}{4}\right)\right\rceil+3$ as stated in Theorem \ref{xlon}.

One of the reasons the known algorithms for the locating chromatic number is sill relatively far from the exact value is because those algorithms have not optimize the colors used in the end-palms (the union of all end-paths from a branch). That means, a better coloring algorithm could be achieved by optimizing the number of colors used in the palms. This motivates us to study the locating chromatic number of palms, which is a subdivision of a star. We believe that determining the locating chromatic number of all subdivision of stars will be a major part of determining the locating chromatic number of all trees.

In general, We will use the terminology in \cite{Book17}. A \textit{palm} $S_n(a_1,a_2,\cdots,a_n)$ for $n\geq2$, is the graph obtained from a star $S_{n}$ on $n+1$ vertices by subdividing the $i^{th}$ edge of $S_{n}$, $a_i-1$ times.
Formally, define the vertex-set and edge-set of $S_n(a_1,a_2,\cdots,a_n)$ as $V=\{a_{0}\}\cup\{a_{i,j}\mid 1\leq i\leq n, 1\leq j\leq a_i\}$ and $E=\{a_{0}a_{i,1}\mid 1\leq i \leq n\}\cup\{a_{i,j}a_{i,j+1}\mid 
1\leq i\leq n, 1\leq j\leq a_i-1\}$. The $k^{th}$ {\em  level} is the set of vertices of distance $k$ to the {\em hub} vertex $a_0$, and the $k^{th}$ {\em end-path} is the subgraph induced by the set $\{a_0\}\cup\{a_{k,j}:1\leq j\leq a_k\}$.
An \textit{olive} tree is defined as $O_n:=S_n(1,2,\cdots,n)$. Figure \ref{O5} is an example of an olive tree. When all the end-paths from a palm have the same length, we call it a {\em regular palm}, and it is denoted by $S_n(k):=S_n(k,k,\cdots,k)$.
\begin{figure}[h]
	\begin{center}
		\begin{tikzpicture}[scale=0.8]
		\begin{footnotesize}
		\draw[color=blue] (1,4)--(2,4) (0,3)--(3,3) (1,2)--(4,2) (1,1)--(5,1);
		\draw[color=blue] (1,4)--(0,3)--(1,2) (1,5)--(0,3)--(1,1);
		\draw[color=black,fill=blue!40] (0,3)circle(0.1) node[left]{$a_{0}$};
		\draw[color=black,fill=blue!40] (1,5)circle(0.1) node[above right]{$a_{1,1}$};
		\draw[color=black,fill=blue!40] (1,4)circle(0.1) node[above right]{$a_{2,1}$};
		\draw[color=black,fill=blue!40] (1,3)circle(0.1) node[above right]{$a_{3,1}$};
		\draw[color=black,fill=blue!40] (1,2)circle(0.1) node[above right]{$a_{4,1}$};
		\draw[color=black,fill=blue!40] (1,1)circle(0.1) node[above right]{$a_{5,1}$};
		\draw[color=black,fill=blue!40] (2,4)circle(0.1) node[above right]{$a_{2,2}$};
		\draw[color=black,fill=blue!40] (2,3)circle(0.1) node[above right]{$a_{3,2}$};
		\draw[color=black,fill=blue!40] (2,2)circle(0.1) node[above right]{$a_{4,2}$};
		\draw[color=black,fill=blue!40] (2,1)circle(0.1) node[above right]{$a_{5,2}$};
		\draw[color=black,fill=blue!40] (3,3)circle(0.1) node[above right]{$a_{3,3}$};
		\draw[color=black,fill=blue!40] (3,2)circle(0.1) node[above right]{$a_{4,3}$};
		\draw[color=black,fill=blue!40] (3,1)circle(0.1) node[above right]{$a_{5,3}$};
		\draw[color=black,fill=blue!40] (4,2)circle(0.1) node[above right]{$a_{4,4}$};
		\draw[color=black,fill=blue!40] (4,1)circle(0.1) node[above right]{$a_{5,4}$};
		\draw[color=black,fill=blue!40] (5,1)circle(0.1) node[above right]{$a_{5,5}$};
		\draw (-1,5.7)rectangle(6,0.5);
		\end{footnotesize}
		\end{tikzpicture}
		\caption{Graph $O_5=S_5(1,2,3,4,5)$}
		\label{O5}
	\end{center}
\end{figure}

In the second section, we provide an estimation of the locating chromatic number of trees using the locating chromatic number of its end-palms.
In the third section we study the relation between the locating chromatic number of a graph and its maximum degree. The forth section discuss a tight upper and lower bound for the locating chromatic number of palms, we also prove that for every integer $k$ between the bounds, there is a palm having the locating chromatic number equal to $k$. 
In the last section, we took an asymptotic approach to study the locating chromatic number of regular palms, we prove that $\chi_L(S_n(k,k,\cdots,k))=\Theta(n^{1/k})$. This leads to the observation that $\chi_L(S_n(k,k,\cdots,k))$ is decreasing and goes to $\left\lceil\log_3\left(\frac{n}{4}\right)\right\rceil+3$ as a function of $k$, but it is increasing and unbounded as a function of $n$.

\section{Locating chromatic number of trees}

In this section, we provide an algorithm to make a locating coloring of any tree by utilizing the locating coloring of its palms. This algorithm requires we know a locating coloring of all of its end-palms, which we will study in the next sections. We also compare our algorithm with the algorithm given in \cite{DAM19} and a combined result in \cite{CHR02} and \cite{SLT75}.

\subsection{Coloring algorithm}

We introduce the notion of an end-path of a tree, that is, a path from a leaf to its nearest branch (a vertex with degree more than two). We call a branch, an end-branch, if it has at least two end-paths. Lastly, an end-palm is an end-branch together with all of its end-paths.

The following algorithm gives a locating coloring of a tree, provided we know a locating coloring of all of its end-palm.
\vspace{16pt}
\hrule
\vspace{3pt}
\centerline{Algorithm 1. Locating coloring of any tree.}
\vspace{3pt}
\hrule
\vspace{3pt}
\noindent\textbf{Input :} Tree $T$, locating coloring of all of its end-palms\\
\textbf{Output :} $c$, a coloring of $T$
\baselineskip 10pt
\begin{enumerate}
    \item Fix a vertex $w$ and for every vertex $u$, $c(u)= d(u,w) \pmod{2}$
    \item For every vertex $u$, $c(u)=c(u)+1$
\item $m\leftarrow 0$
\item For every end-palm of $T$, do:
\item \qquad Let $c'$ be a locating coloring of this end palm
\item \qquad Let $v$ be the branch vertex of the palm, and $v_1$ a neighbor of $v$ in the palm
\item \qquad $x\leftarrow c(v)$
\item \qquad $y\leftarrow 3-x$
\item \qquad Permute the colors in $c'$ such that $v$ is colored $x$ and $v_1$ is colored $y$
\item \qquad For every vertex $u$ in this palm, do:
\item \qquad \qquad If $c'(u)\leq 2$:
\item \qquad \qquad \qquad $c(u)\leftarrow c'(u)$
\item \qquad \qquad else
\item \qquad \qquad \qquad $c(u)\leftarrow m+ c'(u)$
\item \qquad $m\leftarrow m-2+\max\{c'(u): u\ \text{in this palm}\}$
\end{enumerate}
\hrule

\baselineskip 16pt

\begin{theorem}\label{xlT}
    Let $T$ be a tree with $b$ end-palms, $P_1,P_2,\cdots,P_b$, then
    $$\chi_L(T)\leq 2-2b+\sum_{i=1}^b\chi_L(P_i).$$
\end{theorem}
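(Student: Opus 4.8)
The plan is to prove the inequality by checking that the coloring $c$ returned by Algorithm~1 is a locating coloring of $T$ and uses exactly $2-2b+\sum_{i=1}^{b}\chi_L(P_i)$ colors. Since a tree with no branch is a path (and then has no end-palm), we may assume $b\ge1$; if $b=1$ then in fact $T=P_1$ and the bound is an equality, so assume $b\ge2$.

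The count is immediate from the description. Each end-palm $P_i$ has at least three vertices, hence $\chi_L(P_i)\ge3$ (any connected graph on at least three vertices has locating chromatic number at least $3$), and we feed in an optimal locating coloring $c_i'$, using colors $1,\dots,\chi_L(P_i)$. Colors $1$ and $2$ are shared globally, while the ``private'' colors $3,\dots,\chi_L(P_i)$ of $c_i'$ are translated by the running offset $m$ into a block of $\chi_L(P_i)-2$ consecutive integers; for distinct $i$ these blocks are disjoint because $m$ strictly increases (each pass adds $\chi_L(P_i)-2\ge1$). Hence $c$ uses precisely $2+\sum_{i=1}^{b}(\chi_L(P_i)-2)$ colors.

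Properness splits into three pieces: on the part of $T$ in no end-palm (the \emph{interior}) $c$ is the distance-parity $2$-coloring of Steps 1--2 shifted to $\{1,2\}$, hence proper; restricted to any $P_i$, $c$ is $c_i'$ followed by an injective map fixing $\{1,2\}$, hence proper; and the remaining edges join a branch vertex $v_i$ to the interior, where $v_i$ keeps its parity color and its interior neighbours carry the complementary one. Two observations on color codes then do the real work. (i)~If $z\notin P_i$ and $t$ is a private color of $P_i$, then $c^{-1}(t)\subseteq P_i$ and every geodesic from $z$ into $P_i$ runs through $v_i$, so the $t$-th entry of $r_c(z)$ equals $d(z,v_i)+\delta_t^{(i)}$ for a constant $\delta_t^{(i)}=d_{P_i}(v_i,c^{-1}(t))\ge1$; thus the private entries of $P_i$ read off $d(z,v_i)$. (ii)~If $z\in P_i$, then the entries of $r_c(z)$ at colors $1,2$ and at the private colors of $P_i$ reproduce, after reindexing, the color code $r_{c_i'}(z)$ of $z$ inside $P_i$: for a private color this is clear, and for colors $1,2$ it holds because $v_i$ has color $x_i$ and its palm-neighbour $v_1$ has color $y_i=3-x_i$, so each of $c^{-1}(1),c^{-1}(2)$ has a representative in $P_i$ at distance at most $d(z,v_i)+1$, which is no larger than the distance to any representative outside $P_i$.

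Finally $c$ is locating. Let $u\ne u'$. If $u,u'$ lie in one $P_i$, then by~(ii) their codes contain $r_{c_i'}(u)\ne r_{c_i'}(u')$ and we are done. Otherwise, suppose for contradiction that no class $c^{-1}(1),c^{-1}(2)$ or private class resolves the pair. By~(i) this forces $d(u,v_k)=d(u',v_k)$ for every end-palm $P_k$ with $u,u'\notin P_k$. If $u,u'$ are both interior this holds for every end-branch $v_k$; taking $\mu$ the midpoint of the $u$--$u'$ path (the path has even length, else no vertex is equidistant and any $P_k$ resolves) and deleting the two edges of that path at $\mu$, the equidistance hypothesis together with ``a non-path tree has an end-branch'' forces the components of $u$ and of $u'$ to be branchless pendant paths at $\mu$, so $\mu$ is an end-branch whose palm contains both $u$ and $u'$, a contradiction (or $\deg\mu=2$ and $T$ is a path). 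If $u\in P_i$ while $u'$ is interior or in another $P_j$, one combines the identities $d(u,v_k)=d(u,v_i)+d(v_i,v_k)$ with (i)--(ii): the assumed equalities force, inside the spider $P_i$, that $u$ sits at the same ``excess distance'' $d(u',v_i)$ from every privately colored vertex of $P_i$ (and symmetrically in $P_j$), and a short analysis of distances in a spider yields $u=u'$, again a contradiction. The bookkeeping---color count, properness, observations (i)--(ii)---is routine; the crux, and the step I expect to be the main obstacle, is exactly this last case: showing that when $u$ and $u'$ are not in a common end-palm, the private colors (which broadcast the distances to all branch vertices) together with the two ubiquitous colors $1,2$ always separate them, in particular in the ``symmetric'' configurations where no single further palm sees the two vertices at different distances.
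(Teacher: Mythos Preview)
Your approach mirrors the paper's: verify that Algorithm~1 produces a locating coloring with the stated number of colors. The color count, properness check, and observations (i)--(ii) are all correct, and the ``same palm'' case is exactly what the paper says.

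The difference --- and the gap --- is in how you treat pairs $u,u'$ not lying in a common end-palm. The paper dispatches \emph{all} such pairs in one line via Lemma~\ref{bridge} (the bridge lemma): pick an edge on the $u$--$u'$ path; each side of this bridge contains an entire end-palm (every leaf of the tree lies in some $P_k$, and no end-palm is split by an edge on a path between two vertices outside it), hence an entire private color class, and the lemma gives $r_c(u)\ne r_c(u')$ immediately. Your observation~(i) is precisely one half of the bridge lemma's content, but you never use the symmetric half. In your ``crux'' case ($u\in P_i$, $u'\notin P_i$) the constraint you isolate --- that $u$ sits at the fixed excess distance $d(u',v_i)$ from every privately colored vertex of $P_i$ --- is \emph{not} contradictory by itself: place $u$ deep on an arm of the spider $P_i$ that happens to carry no private color, and the condition is satisfied with $u\ne v_i$. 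What actually closes the argument is that $u'$'s side of the bridge $v_iw$ also carries the private colors of some other $P_j$; the two resulting inequalities $d(u',w)+1\le d(u,v_i)$ and $d(u,v_i)+1\le d(u',w)$ then collide. So the ``short analysis of distances in a spider'' you defer is not the right tool; the missing idea is to use the private colors on \emph{both} sides of a separating edge, which is exactly Lemma~\ref{bridge}.

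Once you invoke the bridge lemma, your midpoint analysis for the ``both interior'' case (which is correct once the details are filled in) also becomes unnecessary: the same lemma covers it uniformly.
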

To prove Theorem \ref{xlT}, we need the following lemma
\begin{lemma}\cite{DIDP}\label{bridge}	
	Let $G$ be a graph and let $xy$ be a bridge of $G$. Let $G_x$ and $G_y$ be the component of $G-xy$ containing $x$ and $y$ respectively. Let $c$ be a coloring of $G$. If there exist $i$ and $j$ such that $c^{-1}(i)\subseteq V(G_x)$ and $c^{-1}(j)\subseteq V(G_y)$ then for any two vertices $u \in V(G_x)$ and $v \in V(G_y)$, their color codes are different.
\end{lemma}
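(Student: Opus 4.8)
The plan is to exploit the defining property of a bridge: since $xy$ is the unique edge joining $G_x$ and $G_y$, every path between a vertex of $G_x$ and a vertex of $G_y$ must traverse $xy$. Consequently, for $u\in V(G_x)$ and any $w\in V(G_y)$ one has $d(u,w)=d(u,x)+1+d(y,w)$, and symmetrically for vertices of $G_y$; moreover $d_G$ restricted to either component agrees with the distance computed inside that component, because a path that left a component would have to re-cross the bridge and could only become longer. I would then argue by contradiction: assume two vertices $u\in V(G_x)$ and $v\in V(G_y)$ satisfy $r_c(u)=r_c(v)$, and extract a contradiction from just the $i$-th and $j$-th coordinates of the color codes.

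First I would record the relevant distances. Write $a=d(u,x)$ and $b=d(v,y)$. Because $c^{-1}(j)\subseteq V(G_y)$, every path from $u$ to a $j$-colored vertex crosses the bridge, so the $j$-th coordinate of $r_c(u)$ satisfies the exact identity $d(u,c^{-1}(j))=a+1+d(y,c^{-1}(j))$; meanwhile the $j$-th coordinate of $r_c(v)$ is realized entirely inside $G_y$, so the triangle inequality gives $d(v,c^{-1}(j))\le b+d(y,c^{-1}(j))$. Equating these two coordinates and cancelling the common term $d(y,c^{-1}(j))$ yields $a+1\le b$. Symmetrically, using $c^{-1}(i)\subseteq V(G_x)$ gives the exact identity $d(v,c^{-1}(i))=b+1+d(x,c^{-1}(i))$ together with the triangle-inequality bound $d(u,c^{-1}(i))\le a+d(x,c^{-1}(i))$, and equating the $i$-th coordinates yields $b+1\le a$. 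Adding the two inequalities produces $a+b+2\le a+b$, a contradiction; hence $r_c(u)\ne r_c(v)$.

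The main obstacle, and also the crux of the argument, is recognizing the asymmetry that drives it and choosing the right two coordinates to compare. The key observation is that a color class contained entirely on one side of the bridge forces the \emph{cross-component} distance to that class to split exactly as (distance to the nearer bridge endpoint) $+\,1\,+$ (distance from the other endpoint to the class), whereas the \emph{same-side} distance to the same class is only bounded above by the triangle inequality. Pairing an exact equality against a triangle-inequality estimate is what converts each coordinate equality into a genuine inequality, and the two inequalities obtained from coordinates $i$ and $j$ point in opposite directions precisely because $c^{-1}(i)$ and $c^{-1}(j)$ lie in different components. I would also verify carefully the standing fact that distances within $G_x$ and within $G_y$ coincide with distances in $G$, since the exact splitting identities depend on it.
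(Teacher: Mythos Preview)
Your argument is correct. Note, however, that the paper does not supply its own proof of this lemma: it is quoted from \cite{DIDP} and stated without justification, so there is no in-paper proof to compare against. Your contradiction via the $i$-th and $j$-th coordinates---pairing the exact bridge-splitting identity $d(u,c^{-1}(j))=d(u,x)+1+d(y,c^{-1}(j))$ on the cross side with the triangle-inequality bound on the same side to obtain the incompatible inequalities $a+1\le b$ and $b+1\le a$---is the standard way to handle such a statement and is almost certainly what the cited reference does as well.
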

\begin{proof}[\bf\em Proof of Theorem \ref{xlT}]
    Color $T$ using Algorithm $1$. Any two vertices in the same palm of $T$ is distinguished be the existence of the locating coloring in that palm, and any other two vertices is distinguished by lemma \ref{bridge}.
\end{proof}

Theorem \ref{xlT} makes us realize the importance of studying the locating chromatic number of palms. For the locating chromatic number of palms, see the next sections.

\subsection{Algorithm comparison}
    Let $T$ be a tree and $dim(T)$ its metric dimension, suppose $T$ has $l$ leaves and $\beta$ branches with at least one end-path. 
    One upper bounds of the locating-chromatic number of trees is given by combining the result in \cite{SLT75},
    \begin{align}
    \chi_L(T)\leq dim(T) + \chi(T),
    \end{align}
	and the result in \cite{CHR02},
    \begin{align}
    dim(T)=l-\beta
    \end{align}
    to get the following theorem
	\begin{theorem}\cite{SLT75,CHR02}\label{l-beta}
		Let $T$ be a tree with $l$ leaves and $\beta$ branch with at least one end-path, then $\chi_L(T)\leq l-\beta+2$.
    \end{theorem}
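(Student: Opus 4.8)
The plan is to read the bound off as a one-line consequence of the two results quoted immediately before its statement. First recall the general inequality $\chi_L(G)\le dim(G)+\chi(G)$ of \cite{SLT75}: if $W=\{w_1,\dots,w_d\}$ is a metric basis of $G$, where $d=dim(G)$, assign to each $w_i$ its own private color $i$, and color the vertices of $V(G)\setminus W$ using $\chi(G)$ additional colors via any proper coloring of the subgraph they induce. The resulting coloring $c$ uses at most $dim(G)+\chi(G)$ colors, it is proper, and it is locating: two distinct vertices $u,v$ with $c(u)\ne c(v)$ are already resolved by the class $c^{-1}(c(u))$, since $d(u,c^{-1}(c(u)))=0<d(v,c^{-1}(c(u)))$; and two distinct vertices with the same color must both lie in $V(G)\setminus W$ (each private color is used by a single vertex), so they are resolved by some singleton class $c^{-1}(i)=\{w_i\}$, because $W$ is a resolving set.

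Now specialize to $G=T$ a tree. Trees are bipartite, so $\chi(T)\le 2$; and by \cite{CHR02}, a tree with $l$ leaves and $\beta$ branch vertices incident to at least one end-path satisfies $dim(T)=l-\beta$ (in the degenerate case where $T$ is a path this reads $dim(T)=1\le 2=l-\beta$, so only the inequality $dim(T)\le l-\beta$ is needed there, and it still holds). Plugging in gives $\chi_L(T)\le dim(T)+\chi(T)\le (l-\beta)+2$, which is exactly Theorem \ref{l-beta}.

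I do not expect any real obstacle: the statement is a direct corollary of two black-box results together with the observation that every tree is $2$-chromatic. The only things needing a moment's care are verifying that the construction in the first paragraph is genuinely a locating coloring — done there in two lines — and remembering to treat the path case, where the metric-dimension formula becomes the (still sufficient) inequality $dim(T)\le l-\beta$. The rest is bookkeeping.
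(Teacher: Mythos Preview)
Your proposal is correct and follows exactly the route the paper itself indicates: the theorem is stated there purely as the combination of the cited inequality $\chi_L(T)\le dim(T)+\chi(T)$ with the tree formula $dim(T)=l-\beta$, together with $\chi(T)\le 2$. You go slightly beyond the paper by actually supplying the short construction behind the first inequality and by noting the harmless path exception, but the underlying argument is the same.
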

    Another upper bound is given in the following theorem.
	\begin{theorem}\label{l-b}\cite{DAM19} 
    	Let $T$ be a tree having $l$ leaves and $b$ branch with at least two end-paths, then $\chi_L(T)\leq l-b+2$.
	\end{theorem}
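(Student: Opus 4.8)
I would prove Theorem~\ref{l-b} by exhibiting a locating coloring of $T$ with at most $l-b+2$ colors, reducing the problem via Theorem~\ref{xlT} to a bound on spiders. First dispose of the case where $T$ is a path: then $b=0$ and $\chi_L(T)\le 3\le l-b+2$ whenever $T$ has at least two vertices (the one-vertex case is trivial). Otherwise $T$ has a branch, and in fact a branch with at least two end-paths: fixing a branch $v_0$ and taking a branch $v$ at maximum distance from $v_0$, every subtree of $T$ hanging off $v$ except the one containing $v_0$ is branch-free, hence a path, so $v$ starts at least $\deg(v)-1\ge 2$ end-paths. Thus $b\ge 1$, and Theorem~\ref{xlT} applies: $T$ has end-palms $P_1,\dots,P_b$, where $P_i$ is a spider with, say, $m_i$ legs (end-paths).

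The technical core is the claim $\chi_L(P)\le m+1$ for every spider $P$ with $m\ge 2$ legs. Color the hub with $1$; color leg $1$ by the alternating pattern $2,1,2,1,\dots$ starting at the vertex adjacent to the hub; and for $2\le i\le m$ color leg $i$ by $(i{+}1),1,(i{+}1),1,\dots$. This is a proper coloring using exactly the colors $1,\dots,m+1$. The structural point is that each color $i{+}1$ with $i\ge 2$ appears only on leg $i$, so for a vertex off leg $i$ its distance to color class $i{+}1$ equals its distance to the hub plus one, which strictly increases along every other leg. A routine case check then shows: two vertices on a common leg are separated by the color class of any other leg; two vertices on different legs are separated by the color class of one of those two legs; and the hub is separated from any leg-vertex sharing its color by its distance to color class $2$ or $3$. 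I expect making this verification uniform over all leg lengths --- including legs of length $1$ (so $P$ may be a star) and the two-leg case (so $P$ is a path, where $\chi_L(P)=3=m+1$) --- to be the main obstacle of the argument.

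Granting the spider bound, Theorem~\ref{xlT} yields
$$\chi_L(T)\;\le\;2-2b+\sum_{i=1}^{b}\chi_L(P_i)\;\le\;2-2b+\sum_{i=1}^{b}(m_i+1)\;=\;2-b+\sum_{i=1}^{b}m_i.$$
Since distinct end-paths of $T$ terminate at distinct leaves and $T$ has exactly $l$ leaves, the end-paths contained in the end-palms account for at most $l$ leaves, i.e. $\sum_{i=1}^{b}m_i\le l$; hence $\chi_L(T)\le l-b+2$, as claimed. I would close by remarking that this route actually beats $l-b+2$ whenever $T$ has an end-path hanging off a branch with only one end-path (so $\sum m_i<l$) or has an end-palm whose legs are long enough that $\chi_L(P_i)<m_i+1$ --- exactly the slack Algorithm~1 is meant to exploit.
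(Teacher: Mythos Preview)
Your proof is correct and follows essentially the same route the paper takes: Theorem~\ref{l-b} itself is only cited from~\cite{DAM19}, but the paper recovers it immediately afterward in the comparison theorem by combining Theorem~\ref{xlT} with the palm bound $\chi_L(P_i)\le m_i+1$ from Theorem~\ref{batasxl}, whose proof uses the same alternating leg-coloring you describe (up to relabeling). Your handling of the path case and the inequality $\sum m_i\le l$ is slightly less sharp than the paper's $\sum m_i = l-(\beta-b)$, but the resulting bound $l-b+2$ is the same.
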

    
    The upper bound for the locating chromatic of trees in Theorem \ref{xlT} is better than the upper bounds in Theorems \ref{l-beta} and \ref{l-b}.
    \begin{theorem}
		Let $T$ be a tree having $l$ leaves, $\beta$ branch with at least one end-path, and $b$ branch with at least two end-paths. If $P_1,P_2,\cdots,P_b$ are the end-palms of $T$, then $$\chi_L(T)\leq2-2b+\sum_{i=1}^b\chi_L(P_i)\leq l-\beta+2\leq l-b+2.$$
    \end{theorem}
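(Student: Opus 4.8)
\medskip
\noindent\emph{Proof strategy.}
The plan is to read the displayed chain of inequalities from the outside in. The leftmost inequality $\chi_L(T)\le 2-2b+\sum_{i=1}^b\chi_L(P_i)$ is exactly Theorem~\ref{xlT} applied to $T$ with its end-palms $P_1,\dots,P_b$, so nothing new is required for it. The rightmost inequality $l-\beta+2\le l-b+2$ amounts to $b\le\beta$, which holds because every branch with at least two end-paths is, in particular, a branch with at least one end-path; hence the $b$ branches counted on the far right form a sub-collection of the $\beta$ branches counted in the middle. All the content of the theorem is therefore concentrated in the middle inequality
$$2-2b+\sum_{i=1}^b\chi_L(P_i)\ \le\ l-\beta+2,$$
which I would recast as $\sum_{i=1}^b\bigl(\chi_L(P_i)-2\bigr)\le l-\beta$.

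To prove the middle inequality I would first bound each summand. Write $n_i\ge 2$ for the number of end-paths of the end-branch of $P_i$, so that $P_i$ is the palm $S_{n_i}(a_{i,1},\dots,a_{i,n_i})$ for suitable leg-lengths $a_{i,j}\ge 1$. This palm has exactly $n_i$ leaves, and when $n_i\ge 3$ its hub is the unique branch carrying an end-path, so Theorem~\ref{l-beta} gives $\chi_L(P_i)\le n_i-1+2=n_i+1$. When $n_i=2$ the graph $P_i$ is a path on at least three vertices, so $\chi_L(P_i)=3=n_i+1$ again (alternatively, the palm bounds of Section~4 yield $\chi_L(P_i)\le n_i+1$ uniformly). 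In every case $\chi_L(P_i)-2\le n_i-1$, so it remains to establish the purely combinatorial inequality $\sum_{i=1}^b(n_i-1)\le l-\beta$, equivalently $\sum_{i=1}^b n_i\le l-\beta+b$.

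The last step is a counting identity. Assume $T$ is not a path (the path case being degenerate, with $b=\beta=0$ and an empty sum). Then every leaf of $T$ is the leaf-endpoint of a unique end-path and every end-path has a unique leaf-endpoint, so $T$ has exactly $l$ end-paths; moreover each end-path is attached to exactly one branch, namely its nearest branch. Grouping the $l$ end-paths by the branch to which they are attached: the $b$ end-branches absorb $\sum_{i=1}^b n_i$ of them, while each of the remaining $\beta-b$ branches that carry an end-path carries exactly one, absorbing $\beta-b$ more. Hence $l=(\beta-b)+\sum_{i=1}^b n_i$, that is $\sum_{i=1}^b n_i=l-\beta+b$. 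Substituting back,
\begin{align*}
2-2b+\sum_{i=1}^b\chi_L(P_i)&\le 2-2b+\sum_{i=1}^b(n_i+1)=2-b+\sum_{i=1}^b n_i\\
&=2-b+(l-\beta+b)=l-\beta+2,
\end{align*}
as desired.

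I do not expect a genuine obstacle here: once Theorems~\ref{xlT} and~\ref{l-beta} are available, the argument is essentially bookkeeping. The only points that deserve care are the boundary case $n_i=2$, where the end-palm degenerates to a path and Theorem~\ref{l-beta} alone is not tight enough, so one falls back on the exact value $\chi_L(P_m)=3$; and the leaf/end-path correspondence behind $l=(\beta-b)+\sum_i n_i$, which quietly relies on $T$ having at least one branch.
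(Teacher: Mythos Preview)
Your proof is correct and follows essentially the same route as the paper: the paper only addresses the middle inequality, using the counting identity $l=\beta-b+\sum_i l_i$ together with the palm bound $\chi_L(P_i)\le l_i+1$ from Theorem~\ref{batasxl}. You derive the same bound via Theorem~\ref{l-beta} (treating $n_i=2$ separately) while noting that the Section~4 palm bound gives it uniformly --- that is precisely what the paper invokes, so the arguments coincide.
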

    \begin{proof}
        We will only prove the second inequality. Let $l_i$ be the number of leaves in $P_i$. There are $\beta-b$ end branch(es) with exactly one end-path, so $l=\beta-b+\sum_{i=1}^bl_i$.
        By Theorem \ref{batasxl}, $\chi_L(P_i)\leq l_i+1$ and the result follows.
    \end{proof}

\section{Maximum degree}

	In this section, we study the maximum degree of any graph with certain locating chromatic number. The correlation between the maximum degree of a graph with its metric dimension is used to characterize infinite graphs with finite metric dimension, see \cite{DAM12}. The maximum degree of a graph having certain locating chromatic number is also needed to characterize infinite graphs with finite locating chromatic number. In particular, we show that any graph with locating chromatic number $k\geq3$ must have maximum degree at most $4\cdot3^{k-3}$.
	
\begin{theorem}
\label{Delta xl}
If $G$ is a graph with $\chi_L(G)=k\geq3$, then $\Delta(G)\leq 4\cdot3^{k-3}$.
\end{theorem}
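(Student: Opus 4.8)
The plan is to fix a locating $k$-coloring $c$ of $G$, choose a vertex $v$ of degree $\Delta=\Delta(G)$, and bound $\Delta$ by controlling how many distinct color codes the neighbours of $v$ can carry. After permuting the color names we may assume $c(v)=1$; since $c$ is proper, every neighbour of $v$ receives a color from $\{2,\dots,k\}$. Let $S\subseteq\{2,\dots,k\}$ be the set of colors that actually occur on $N(v)$ and put $s=|S|$. The argument uses only two elementary distance facts: for a neighbour $u$ of $v$ one has $\bigl|d(u,c^{-1}(j))-d(v,c^{-1}(j))\bigr|\le 1$ for every color $j$ (as $u\sim v$); and $d(v,c^{-1}(j))=1$ for each $j\in S$, while $d(v,c^{-1}(j'))\ge 2$ for each $j'\in\{2,\dots,k\}\setminus S$ (because $c(v)=1$ and $v$ has a neighbour of color $j$ if and only if $j\in S$).

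Next I would estimate, separately for each color $j\in S$, the number of distinct color codes a neighbour $u$ of $v$ with $c(u)=j$ can have. In $r_c(u)$ the $j$-th coordinate is $0$ and the first coordinate is $1$ (since $u\sim v\in c^{-1}(1)$ but $c(u)\ne 1$). For a coordinate $j'\notin\{1,j\}$ there are two regimes: if $j'\in S$ then $d(v,c^{-1}(j'))=1$, so $d(u,c^{-1}(j'))\in\{1,2\}$ — it cannot be $0$, as $c(u)=j\ne j'$ — giving at most $2$ choices; if $j'\notin S$ then $d(v,c^{-1}(j'))\ge 2$, so $d(u,c^{-1}(j'))$ is one of $d(v,c^{-1}(j'))-1,\,d(v,c^{-1}(j')),\,d(v,c^{-1}(j'))+1$, at most $3$ choices. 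There are $s-1$ coordinates of the first type and $k-1-s$ of the second, so by the locating property the number of neighbours of $v$ of color $j$ is at most $2^{s-1}3^{k-1-s}$. Summing over the $s$ colors of $S$,
\[
\Delta \;=\; |N(v)| \;\le\; s\cdot 2^{s-1}\cdot 3^{k-1-s}.
\]

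Finally I would check that $s\cdot 2^{s-1}\cdot 3^{k-1-s}\le 4\cdot 3^{k-3}$ for every integer $s$ with $1\le s\le k-1$. Writing the left-hand side as $3^{k-1}h(s)$ with $h(s)=s\,2^{s-1}/3^{s}$, a short computation gives $h(1)=\tfrac13$, $h(2)=h(3)=\tfrac49$, and $h(s+1)/h(s)=\tfrac{2(s+1)}{3s}\le 1$ for all $s\ge 2$; hence $\max_{s\ge 1}h(s)=\tfrac49$ (attained at $s=2$, which lies in the range since $k\ge3$), and therefore $\Delta\le \tfrac49\cdot 3^{k-1}=4\cdot 3^{k-3}$.

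The one delicate point is the sharpening that distinguishes coordinates indexed by colors in $S$ (only two possible values) from those outside $S$ (three values); this is exactly what improves the crude estimate $\Delta\le(k-1)3^{k-2}$ to the stated $4\cdot 3^{k-3}$, and it also indicates that an extremal $v$ should span only two or three color classes among its neighbours. Everything after the counting inequality is the one-line monotonicity check above.
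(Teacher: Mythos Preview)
Your argument is correct, and it takes a cleaner route than the paper's. Both proofs fix a locating $k$-coloring, assume $c(v)=1$, and exploit that each coordinate of a neighbour's code lies within $1$ of the corresponding coordinate of $r_c(v)$. The paper proceeds by contradiction: it groups the neighbours of $v$ according to their distances to colors $4,\dots,k$ (at most $3^{k-3}$ groups), applies the pigeonhole principle to find five neighbours sharing these coordinates, and then runs a three-case analysis on the remaining coordinates $2$ and $3$ to force a repeated code. Your approach instead singles out the set $S$ of colors actually present on $N(v)$ and observes directly that each $j'\in S\setminus\{c(u)\}$ contributes only two possible values to the code of $u$ (since $d(v,c^{-1}(j'))=1$ forces $d(u,c^{-1}(j'))\in\{1,2\}$), while the $k-1-s$ colors outside $S$ contribute three; summing and optimizing over $s$ gives the bound in one stroke. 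What your method buys is a transparent explanation of the constant $4$: it is exactly $\max_{s\ge 1}\,9s\cdot 2^{s-1}/3^{s}$, attained at $s\in\{2,3\}$, which also tells you that an extremal vertex sees only two or three colors among its neighbours. The paper's case analysis obscures this structural takeaway but is a bit more hands-on; your counting is shorter and generalizes more readily.
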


\begin{proof}
Let $G$ be a graph with $\chi_L(G)=k\geq3$ and $c:V(G)\to\{1,2,\cdots,k\}$ be a locating coloring of $G$.
Consider the color code of any vertex $v$, i.e., $r_c(v)=(a_1,a_2,\cdots,a_k)$. 
Without loss of generality, by permuting the colors, we may assume that $c(v)=1$, and so $a_1=0$, and $0<a_2\leq\cdots\leq a_k$.
Let $u$ be a neighbor of $v$, and $r_c(u)=(b_1,b_2,\cdots,b_k)$.
Then, $|a_i-b_i|\leq 1$ for all $i$ by the triangle inequality, and so $b_i \in \{a_i-1,a_i,a_i+1\}$ for all $i$.

Now we prove that $d(v)\leq 4\cdot3^{k-3}$ for any vertex $v$. To the contrary suppose that $d(v)\geq 4\cdot3^{k-3}+1$. 
First, group all the neighbors of $v$ depending to the distances to colors $4,5,\cdots,k-1,k$. All neighbors of $v$ with the same distances to colors $4,5,\cdots,k-1,k$ will be in the same group.
This means that their color codes of all members in a group will have the same ordinates in positions $4,5,\cdots,k-1,k$. 
Since the distance of any neighbor of $v$ to $c^{-1}(i)$ is either $a_i-1$, $a_i$, or, $a_i+1$, then 
there will be at most $3^{k-3}$ groups.
Since $v$ has $d(v)\geq 4\cdot3^{k-3}+1$ neighbors, by the pigeon hole principle there exists 
a group containing at least $5$ vertices, say $u_1,u_2,u_3,u_4,u_5$. The color codes of all the members of such a group will
be $(1,*,*,x_4,x_5, \cdots, x_k)$, for some fixed nonnegative integers $x_4,x_5,\cdots, x_k$.
		
If there exists a vertex $u$ in $U=\{u_1,u_2,\cdots,u_5\}$ with $c(u)\geq4$ then $c(u_i) = c(u)$ for 
all $i \in \{1,2,\cdots,5\}$.
Therefore, $0=a_1 < a_2\leq a_3 \leq \cdots \leq a_{c(u)}=1$, and so $a_2=a_3=1$. 
This implies that for every $u\in U$, $d(u,c_j)$ is 1 or 2, for $j=2,3.$
Since there are $5$ vertices in $U$ with $4$ possible representations, there will be two distinct vertices  with the same color code, a contradiction.
		
Now, the only possibility is that the color of each vertex $u \in U$ is either $2$ or $3$; it cannot be color $1$ because $u$ is adjacent to $v$ and $c(v)=1$. 
If all vertices in $U$ have the same color, say $c(u)=x$ for every $u \in U$ with $x=2$ or $x=3$, and 
let $y\in\{2,3\}-\{x\}$ (the other color), then we have $d(u,c^{-1}(1))=1$, $d(u,c^{-1}(x))=0$, and $d(u,c^{-1}(y))\in\{a_y-1,a_y,a_y+1\}$. 
This means that there are $5$ vertices in $U$ with $3$ possible representations, therefore 
there will be two vertices with the same color code, a contradiction.
		
So, $U$ must contain vertices of colors 2 and 3 only, and so $a_2=a_3=1$. 
Let $u \in U$. If $c(u)=2$ then $d(u,c^{-1}(1))=1$, $d(u,c^{-1}(2))=0$, and $d(u,c^{-1}(3))\in\{1,2\}$; and if $c(u)=3$ then $d(u,c^{-1}(1))=1$, $d(u,c^{-1}(2))\in\{1,2\}$, and $d(u,c^{-1}(3))=0$. 
Again, we have 4 possible representation for at least $5$ vertices, therefore there will be two vertices with the same color codes, a contradiction.
		
Therefore, $deg(v)\leq 4\cdot3^{k-3}$ for any vertex $v$. Thus, $\Delta(G)\leq 4\cdot3^{k-3}$.
\end{proof}


The tightness of this bound will be discussed in the next section. A different proof of Theorem \ref{Delta xl} was given in \cite{COR15}. In \cite{CHR02}, Chartrand et al. gave the following result.
\begin{theorem}
\label{ctr}
	(Theorem 4.3 in \cite{CHR02})
	Let $k\geq 3$. If $T$ is a tree for which $\Delta(T)>(k-1)2^{k-2}$, then $\chi_L(T)>k$.
\end{theorem}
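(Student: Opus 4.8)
\emph{Proof proposal.} I would prove the contrapositive: if $T$ is a tree with $\chi_L(T)\le k$, then $\Delta(T)\le(k-1)2^{k-2}$. Fix a locating $k$-coloring $c$ of $T$ and a vertex $v$ with $\deg(v)=\Delta(T)$, and permute the colors so that $c(v)=k$. Put $a_j=d(v,c^{-1}(j))$ for $1\le j\le k-1$. Since $T$ is a tree, the components of $T-v$ are in bijection with the neighbors of $v$; for a neighbor $u$ write $T_u$ for the component containing $u$, and note that these components are pairwise vertex-disjoint. As in the proof of Theorem~\ref{Delta xl}, the triangle inequality gives $d(u,c^{-1}(j))\in\{a_j-1,a_j,a_j+1\}$ for every $j\le k-1$, while $d(u,c^{-1}(k))=1$ for every neighbor $u$. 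Hence the color code $r_c(u)$ is completely determined by the vector $(d(u,c^{-1}(j))-a_j)_{1\le j\le k-1}\in\{-1,0,1\}^{k-1}$, and since $c$ is a locating coloring these vectors are pairwise distinct.

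The plan is then a pigeonhole step followed by a counting step. Since each neighbor of $v$ has a color in $\{1,\dots,k-1\}$, if $\Delta(T)>(k-1)2^{k-2}$ then some color $c_0\le k-1$ is used by more than $2^{k-2}$ neighbors of $v$, and I would derive a contradiction by showing that the neighbors colored $c_0$ realize at most $2^{k-2}$ distinct color codes. For such a neighbor $u$ the associated vector is fixed in coordinate $k$ (value $1-0$), and it is also fixed in coordinate $c_0$: indeed $v$ is adjacent to a color-$c_0$ vertex, so $a_{c_0}=1$ and $d(u,c^{-1}(c_0))-a_{c_0}=-1$. That leaves $k-2$ coordinates, each a priori ranging over three values; bounding these crudely by $3^{k-2}$ is precisely what yields the weaker estimate of Theorem~\ref{Delta xl}. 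The improvement must come from the tree structure: the value $a_j-1$ in coordinate $j$ occurs only when $T_u$ contains a color-$j$ vertex at distance $a_j-1$ from $u$, i.e. when $T_u$ \emph{hosts} a nearest color-$j$ vertex of $v$. Because the components $T_u$ are disjoint, this hosting data can be recorded by one bit per color, so the plan is to encode each neighbor $u$ colored $c_0$ by the signature $\sigma_u\in\{0,1\}^{k-2}$ listing, for each $j\ne c_0,k$, whether $T_u$ hosts a nearest color-$j$ vertex, and then to show that this encoding is injective on the set of neighbors colored $c_0$.

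The hard part is that this encoding is not quite faithful on its own, and the obstruction is the middle value $a_j$: a neighbor $u$ can satisfy $d(u,c^{-1}(j))=a_j$ without $T_u$ hosting a nearest color-$j$ vertex, so $\sigma_u$ alone does not pin down $r_c(u)$, and two neighbors colored $c_0$ with $\sigma_u=\sigma_{u'}$ would have to be separated through a coordinate in this middle state. I expect the resolution to be a case analysis in the spirit of the five-vertex argument in the proof of Theorem~\ref{Delta xl}, but exploiting the disjointness of the components $T_u$: one shows that whenever two neighbors colored $c_0$ differ only through middle-state coordinates, the requirement that their color codes differ either forces a corresponding difference already recorded in $\sigma_u$ versus $\sigma_{u'}$, or else produces two distinct vertices deeper inside $T$ with equal color codes, contradicting that $c$ is locating. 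Carrying this case analysis through cleanly, while tracking which structural degree of freedom is consumed to separate each offending pair, is where the real work lies; granting it, one gets at most $2^{k-2}$ color codes among the neighbors of any fixed color, and the pigeonhole step then gives $\Delta(T)\le(k-1)2^{k-2}$, which is the theorem.
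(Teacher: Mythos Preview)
There is no proof to compare against: the paper does not prove this statement. It merely quotes Theorem~4.3 of \cite{CHR02} and then immediately explains that the statement is \emph{false} for $k\ge 5$. The paper exhibits (Figure~\ref{S_36(5)} and the surrounding discussion) a tree $T$ with $\Delta(T)=36$ and $\chi_L(T)=5$; since $(5-1)2^{5-2}=32<36$, this tree satisfies $\Delta(T)>(k-1)2^{k-2}$ while $\chi_L(T)\not>k$, directly contradicting the theorem. The correct bound, proved as Theorem~\ref{Delta xl}, is $\Delta(T)\le 4\cdot 3^{k-3}$, and for $k\ge 5$ one has $4\cdot 3^{k-3}>(k-1)2^{k-2}$.

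Consequently your proposal cannot be completed, and the place where it breaks is exactly the step you flagged as ``the hard part'' and then granted. Your target assertion is that the neighbors of $v$ sharing a fixed color $c_0$ realize at most $2^{k-2}$ distinct color codes. In the paper's counterexample with $k=5$, the hub has $36$ neighbors, and from the coloring in Figure~\ref{S_36(5)} one sees that $18$ of them carry color $2$ (and $18$ carry color $3$); since $c$ is a locating coloring these $18$ color codes are pairwise distinct, yet $2^{k-2}=8$. So the injectivity of your signature map $u\mapsto\sigma_u\in\{0,1\}^{k-2}$ genuinely fails, and no case analysis on the ``middle value'' $a_j$ can rescue it: two neighbors $u,u'$ of the same color can have $\sigma_u=\sigma_{u'}$ (neither component hosts a nearest colour-$j$ vertex) while being separated by the pair $\{a_j,a_j+1\}$, because a colour-$j$ vertex lying inside $T_u$ at distance exactly $a_j$ from $u$ produces $d(u,c^{-1}(j))=a_j$ without affecting $\sigma_u$. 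This third possibility is real in trees, which is why the bound is governed by $3^{k-3}$ rather than $2^{k-2}$.
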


In other form, we have that if $T$ is a tree with locating chromatic number $\chi_L(T)=k \;(\geq 3)$ then $\Delta(T) \leq (k-1)2^{k-2}$. This result is true only for $k=3$ and $k=4$. For $k \geq 5$, Theorem \ref{Delta xl} corrects the upper bound of the maximum degree of such tree $T$, namely $\Delta(T) \leq 4\cdot3^{k-3}$. Figure \ref{S_36(5)} gives a locating coloring with $k=5$ colors for a tree with $\Delta(T)=36$.

\begin{figure}[h]
\begin{center}
\begin{tikzpicture}[scale=0.45,rotate=270] 
	\begin{footnotesize}
	\foreach \x in {1,...,36} \draw[color=gray] (0,0)arc(180:90:2 and \x -18.5)--(12,\x -18.5);
    \draw[color=gray,fill=yellow!50] (0,0)circle(0.12)node[above]{\color{black}1};
	\foreach \x in {1,...,36}
	\draw[color=gray,fill=yellow!50]	(2,\x -18.5)circle(0.12) (4,\x -18.5)circle(0.12) (6,\x -18.5)circle(0.12) (8,\x -18.5)circle(0.12) (10,\x -18.5)circle(0.12) (12,\x -18.5) circle(0.12);
	\foreach \x in {1,...,9} \draw
	(2.1,4*\x-18.6)node[right]{2} (4.1,4*\x-18.6)node[right]{1}
	(2.1,4*\x-19.6)node[right]{3} (4.1,4*\x-19.6)node[right]{1}
	(2.1,4*\x-20.6)node[right]{2} (4.1,4*\x-20.6)node[right]{3}
	(2.1,4*\x-21.6)node[right]{3} (4.1,4*\x-21.6)node[right]{2};
	\foreach \x in {1}
	\draw (6.1,18.4-\x)node[right]{2} (8.1,18.4-\x)node[right]{1} (10.1,18.4-\x)node[right]{2} (12.1,18.4-\x)node[right]{1};
	\foreach \x in {2}
	\draw (6.1,18.4-\x)node[right]{3} (8.1,18.4-\x)node[right]{1} (10.1,18.4-\x)node[right]{3} (12.1,18.4-\x)node[right]{1};
	\foreach \x in {3}
	\draw (6.1,18.4-\x)node[right]{2} (8.1,18.4-\x)node[right]{3} (10.1,18.4-\x)node[right]{2} (12.1,18.4-\x)node[right]{3};
	\foreach \x in {4}
	\draw (6.1,18.4-\x)node[right]{3} (8.1,18.4-\x)node[right]{2} (10.1,18.4-\x)node[right]{3} (12.1,18.4-\x)node[right]{2};
	\foreach \x in {5}
	\draw (6.1,18.4-\x)node[right]{\color{red}4} (8.1,18.4-\x)node[right]{1} (10.1,18.4-\x)node[right]{2} (12.1,18.4-\x)node[right]{1};
	\foreach \x in {6}
	\draw (6.1,18.4-\x)node[right]{\color{red}4} (8.1,18.4-\x)node[right]{1} (10.1,18.4-\x)node[right]{3} (12.1,18.4-\x)node[right]{1};
	\foreach \x in {7}
	\draw (6.1,18.4-\x)node[right]{\color{red}4} (8.1,18.4-\x)node[right]{3} (10.1,18.4-\x)node[right]{2} (12.1,18.4-\x)node[right]{3};
	\foreach \x in {8}
	\draw (6.1,18.4-\x)node[right]{\color{red}4} (8.1,18.4-\x)node[right]{2} (10.1,18.4-\x)node[right]{3} (12.1,18.4-\x)node[right]{2};
	\foreach \x in {9}
	\draw (6.1,18.4-\x)node[right]{2} (8.1,18.4-\x)node[right]{\color{red}4} (10.1,18.4-\x)node[right]{2} (12.1,18.4-\x)node[right]{1};
	\foreach \x in {10}
	\draw (6.1,18.4-\x)node[right]{3} (8.1,18.4-\x)node[right]{\color{red}4} (10.1,18.4-\x)node[right]{3} (12.1,18.4-\x)node[right]{1};
	\foreach \x in {11}
	\draw (6.1,18.4-\x)node[right]{2} (8.1,18.4-\x)node[right]{\color{red}4} (10.1,18.4-\x)node[right]{2} (12.1,18.4-\x)node[right]{3};
	\foreach \x in {12}
	\draw (6.1,18.4-\x)node[right]{3} (8.1,18.4-\x)node[right]{\color{red}4} (10.1,18.4-\x)node[right]{3} (12.1,18.4-\x)node[right]{2};
	\foreach \x in {13}
	\draw (6.1,18.4-\x)node[right]{2} (8.1,18.4-\x)node[right]{1} (10.1,18.4-\x)node[right]{\color{blue}5} (12.1,18.4-\x)node[right]{1};
	\foreach \x in {14}
	\draw (6.1,18.4-\x)node[right]{3} (8.1,18.4-\x)node[right]{1} (10.1,18.4-\x)node[right]{\color{blue}5} (12.1,18.4-\x)node[right]{1};
	\foreach \x in {15}
	\draw (6.1,18.4-\x)node[right]{2} (8.1,18.4-\x)node[right]{3} (10.1,18.4-\x)node[right]{\color{blue}5} (12.1,18.4-\x)node[right]{3};
	\foreach \x in {16}
	\draw (6.1,18.4-\x)node[right]{3} (8.1,18.4-\x)node[right]{2} (10.1,18.4-\x)node[right]{\color{blue}5} (12.1,18.4-\x)node[right]{2};
	\foreach \x in {17}
	\draw (6.1,18.4-\x)node[right]{\color{red}4} (8.1,18.4-\x)node[right]{1} (10.1,18.4-\x)node[right]{\color{blue}5} (12.1,18.4-\x)node[right]{1};
	\foreach \x in {18}
	\draw (6.1,18.4-\x)node[right]{\color{red}4} (8.1,18.4-\x)node[right]{1} (10.1,18.4-\x)node[right]{\color{blue}5} (12.1,18.4-\x)node[right]{1};
	\foreach \x in {19}
	\draw (6.1,18.4-\x)node[right]{\color{red}4} (8.1,18.4-\x)node[right]{3} (10.1,18.4-\x)node[right]{\color{blue}5} (12.1,18.4-\x)node[right]{3};
	\foreach \x in {20}
	\draw (6.1,18.4-\x)node[right]{\color{red}4} (8.1,18.4-\x)node[right]{2} (10.1,18.4-\x)node[right]{\color{blue}5} (12.1,18.4-\x)node[right]{2};
	\foreach \x in {21}
	\draw (6.1,18.4-\x)node[right]{2} (8.1,18.4-\x)node[right]{\color{red}4} (10.1,18.4-\x)node[right]{\color{blue}5} (12.1,18.4-\x)node[right]{1};
	\foreach \x in {22}
	\draw (6.1,18.4-\x)node[right]{3} (8.1,18.4-\x)node[right]{\color{red}4} (10.1,18.4-\x)node[right]{\color{blue}5} (12.1,18.4-\x)node[right]{1};
	\foreach \x in {23}
	\draw (6.1,18.4-\x)node[right]{2} (8.1,18.4-\x)node[right]{\color{red}4} (10.1,18.4-\x)node[right]{\color{blue}5} (12.1,18.4-\x)node[right]{3};
	\foreach \x in {24}
	\draw (6.1,18.4-\x)node[right]{3} (8.1,18.4-\x)node[right]{\color{red}4} (10.1,18.4-\x)node[right]{\color{blue}5} (12.1,18.4-\x)node[right]{2};
	\foreach \x in {25}
	\draw (6.1,18.4-\x)node[right]{2} (8.1,18.4-\x)node[right]{1} (10.1,18.4-\x)node[right]{2} (12.1,18.4-\x)node[right]{\color{blue}5};
	\foreach \x in {26}
	\draw (6.1,18.4-\x)node[right]{3} (8.1,18.4-\x)node[right]{1} (10.1,18.4-\x)node[right]{3} (12.1,18.4-\x)node[right]{\color{blue}5};
	\foreach \x in {27}
	\draw (6.1,18.4-\x)node[right]{2} (8.1,18.4-\x)node[right]{3} (10.1,18.4-\x)node[right]{2} (12.1,18.4-\x)node[right]{\color{blue}5};
	\foreach \x in {28}
	\draw (6.1,18.4-\x)node[right]{3} (8.1,18.4-\x)node[right]{2} (10.1,18.4-\x)node[right]{3} (12.1,18.4-\x)node[right]{\color{blue}5};
	\foreach \x in {29}
	\draw (6.1,18.4-\x)node[right]{\color{red}4} (8.1,18.4-\x)node[right]{1} (10.1,18.4-\x)node[right]{2} (12.1,18.4-\x)node[right]{\color{blue}5};
	\foreach \x in {30}
	\draw (6.1,18.4-\x)node[right]{\color{red}4} (8.1,18.4-\x)node[right]{1} (10.1,18.4-\x)node[right]{3} (12.1,18.4-\x)node[right]{\color{blue}5};
	\foreach \x in {31}
	\draw (6.1,18.4-\x)node[right]{\color{red}4} (8.1,18.4-\x)node[right]{3} (10.1,18.4-\x)node[right]{2} (12.1,18.4-\x)node[right]{\color{blue}5};
	\foreach \x in {32}
	\draw (6.1,18.4-\x)node[right]{\color{red}4} (8.1,18.4-\x)node[right]{2} (10.1,18.4-\x)node[right]{3} (12.1,18.4-\x)node[right]{\color{blue}5};
	\foreach \x in {33}
	\draw (6.1,18.4-\x)node[right]{2} (8.1,18.4-\x)node[right]{\color{red}4} (10.1,18.4-\x)node[right]{2} (12.1,18.4-\x)node[right]{\color{blue}5};
	\foreach \x in {34}
	\draw (6.1,18.4-\x)node[right]{3} (8.1,18.4-\x)node[right]{\color{red}4} (10.1,18.4-\x)node[right]{3} (12.1,18.4-\x)node[right]{\color{blue}5};
	\foreach \x in {35}
	\draw (6.1,18.4-\x)node[right]{2} (8.1,18.4-\x)node[right]{\color{red}4} (10.1,18.4-\x)node[right]{2} (12.1,18.4-\x)node[right]{\color{blue}5};
	\foreach \x in {36}
	\draw (6.1,18.4-\x)node[right]{3} (8.1,18.4-\x)node[right]{\color{red}4} (10.1,18.4-\x)node[right]{3} (12.1,18.4-\x)node[right]{\color{blue}5};
	\end{footnotesize}
\end{tikzpicture}\\
		\caption{A tree $T$ with $\Delta(T)=36$ and $\chi_L(T)=5$.}
		\label{S_36(5)}
\end{center}
\end{figure}

\section{The locating chromatic number of palms}
	
In this section, we give a tight upper and lower bound for $\chi_L(S_n(a_1,a_2,\cdots,a_n))$. We show that the upper bounds of the maximum degree in Theorems \ref{Delta xl} are tight not only for general graph, but also for trees. We also show that for every integer $k\geq 3$ there is a palm with locating chromatic number $k$ and $\Delta=4\cdot 3^{k-3}$.

\begin{theorem}\label{batasxl}
	Let $n\geq 2$ and $G=S_n(a_1,a_2,\cdots,a_n)$ be a palm, then 
	\begin{align}
	\left\lceil\log_3\left(\frac{n}{4}\right)\right\rceil+3\leq \chi_L(G)\leq n+1.
	\end{align}
	Moreover, 
	\begin{align}
	\text{for every } k\ { with } \left\lceil\log_3\left(\frac{n}{4}\right)\right\rceil+3\leq k\leq n+1, \text{there exist a palm } G\ \text{with}\ \chi_L(G)=k.
	\end{align}
\end{theorem}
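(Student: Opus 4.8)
Set $k:=\chi_L(G)$ for $G=S_n(a_1,\dots,a_n)$. As $G$ is connected on at least $3$ vertices and is not a path of order $\le 2$, we have $k\ge 3$, so Theorem \ref{Delta xl} applies; since the hub $a_0$ has degree $n$ it gives $n\le\Delta(G)\le 4\cdot 3^{k-3}$, whence $k\ge 3+\log_3(n/4)$ and, $k$ being an integer, $k\ge 3+\lceil\log_3(n/4)\rceil$ (and $\lceil\log_3(n/4)\rceil\ge 0$ for $n\ge 2$, matching $k\ge 3$). For the upper bound, if $n\ge 3$ then $a_0$ is the unique branch of $G$ and every edge at $a_0$ lies on an end-path, so $G$ has $l=n$ leaves and $\beta=1$ branch meeting an end-path, and Theorem \ref{l-beta} gives $\chi_L(G)\le l-\beta+2=n+1$; if $n=2$ then $G$ is a path on at least $3$ vertices, with $\chi_L(G)=3=n+1$. (Alternatively, a uniform $(n+1)$-colouring works for all $n\ge 2$: give $a_0$ the colour $1$ and run the colours $i+1,1,i+1,1,\dots$ along the $i$-th path from $a_{i,1}$; for $j\ne i$ the entry in position $j+1$ of $r_c(a_{i,p})$ equals $p+1$, which together with positions $1$ and $i+1$ recovers both $i$ and $p$.)

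\noindent\textbf{Plan for the ``moreover''.}
I would interpolate between the two extreme palms on $n$ end-paths and apply a discrete intermediate-value argument. The upper extreme is $S_n(1,\dots,1)=K_{1,n}$ with $\chi_L(K_{1,n})=n+1$: two equally coloured leaves have identical color codes, since every geodesic from a leaf to a colour class passes through $a_0$, and $n+1$ colours clearly suffice. For the lower extreme set $k_0:=\lceil\log_3(n/4)\rceil+3$; I would construct, for $m$ large enough, a locating $k_0$-colouring of the regular palm $S_n(m,\dots,m)$, so that with the lower bound already proved $\chi_L(S_n(m,\dots,m))=k_0$. Such a colouring must realise $n$ distinct color codes on the $n$ neighbours of $a_0$ --- exactly the budget $4\cdot 3^{k_0-3}\ge n$ of admissible neighbour-codes coming from the proof of Theorem \ref{Delta xl} --- and then be propagated consistently down all $n$ long paths so that internal vertices are resolved too. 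Concretely, I would arrange $r_c(a_0)=(0,1,1,q_4,\dots,q_{k_0})$ with $2\le q_4<\cdots<q_{k_0}$, colour the bulk of each path with a fixed periodic $\{1,2,3\}$-pattern, and on each path place single occurrences of the colours $4,\dots,k_0$ near the depths $q_j$, choosing per path whether the colour-$j$ occurrence sits at depth $q_j$, at $q_j+1$, or far below, so that $d(a_{i,1},c^{-1}(j))$ takes each of the values $q_j-1,q_j,q_j+1$; this provides the factor $3^{k_0-3}$, and the periodic pattern together with these occurrences lets every color code recover both the path and the depth of its vertex. I expect this construction and its verification, essentially the tightness of Theorem \ref{Delta xl} for trees, to be the main obstacle.

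\noindent\textbf{The continuity lemma, and finishing.}
The interpolation needs: \emph{if a palm $G'$ is obtained from a palm $G$ by lengthening one end-path by a single vertex, then $|\chi_L(G')-\chi_L(G)|\le 1$.} View $G'$ as $G$ with a new pendant vertex $w$ attached at a leaf $x$ of $G$. The inequality $\chi_L(G')\le\chi_L(G)+1$ is easy: extend a locating colouring of $G$ by giving $w$ a fresh colour; distances among old vertices are unchanged and $w$ is pinned by its private colour. For $\chi_L(G)\le\chi_L(G')+1$, take a locating $k'$-colouring $c'$ of $G'$ with $c'(w)=k'$ and $c'(x)=1$; since the only path from any $v\in V(G)$ to $w$ goes through $x$, $d_{G'}(v,w)=d_G(v,x)+1$. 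Define $c$ on $G$ by keeping every vertex other than $x$ at its $c'$-colour and recolouring $x$ with $k'$ if $w$ is the unique vertex of colour $k'$ under $c'$, and with a new colour $k'+1$ otherwise; this is a proper colouring of $G$ with at most $k'+1$ colours. A short check shows $r_c(u)$ determines $r_{c'}(u)$ for every $u\in V(G)$: positions $2,\dots,k'-1$ of $r_{c'}(u)$ are literally those of $r_c(u)$, and positions $1$, $k'$, and (when used) $k'+1$ of $r_{c'}(u)$ are suitable minima of entries of $r_c(u)$; hence $c$ is locating. Finally, starting from $S_n(1,\dots,1)$ and lengthening paths one vertex at a time in round-robin order, one reaches $S_n(m,\dots,m)$ for any $m$; along this finite sequence of palms, $\chi_L$ starts at $n+1$, ends at $k_0$, and changes by at most $1$ at each step, so it attains every integer between $k_0$ and $n+1$. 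This proves the ``moreover'' part, completing Theorem \ref{batasxl}.
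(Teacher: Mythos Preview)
Your outline matches the paper's approach almost exactly: the lower bound via Theorem~\ref{Delta xl}, the upper bound by an explicit $(n+1)$-colouring, and the ``moreover'' by a discrete intermediate-value argument resting on the one-step continuity lemma (the paper's Lemma~\ref{1}, which only proves $\chi_L(G')\ge\chi_L(G)-1$; your two-sided version is more than needed but fine). The paper's chain runs from the star $S_n$ to the \emph{olive tree} $O_n$ rather than to a long regular palm, and the hard construction you flag as ``the main obstacle'' is exactly what the paper supplies as Algorithm~2 and Theorem~\ref{xlon}.

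One concrete point to fix in your sketch: a \emph{single} fixed periodic $\{1,2,3\}$-pattern on every path, together with the three choices per colour $j\in\{4,\dots,k_0\}$, only yields $3^{k_0-3}$ distinguishable paths, not the required $4\cdot 3^{k_0-3}$. The missing factor~$4$ comes from using four different alternating base patterns $(2,1),(3,1),(2,3),(3,2)$ on the paths (this is Step~5(b) of the paper's Algorithm~2); with only one pattern you would need $k_0+1$ colours when $3^{k_0-3}<n\le 4\cdot 3^{k_0-3}$, and the interpolation would not reach the true lower bound. Once you allow the four base patterns, your plan and the paper's coincide.
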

\begin{proof}
	We will prove the first part of Theorem \ref{batasxl}, the second part will be proven after Theorem \ref{xlon}. The lower bound is a direct consequence of Theorem \ref{Delta xl}. To prove the upper bound, let $c:V\to \{1,2,\cdots,n+1\}$ with $c(a_0)=n+1$, $c(a_{i,j})=i$ if $j$ is odd, and $c(a_{i,j})=n+1$ if $j$ is even. Any two vertices in the same end-path, say the $i^{th}$ end-path, is resolved by $c^{-1}(j)$ for every $j\ne i$, and any two vertices in different end-paths, say the $i^{th}$ and $j^{th}$ end-path, is resolved by $c^{-1}(i)$ and $c^{-1}(j)$. Thus, $c$ is a locating $(n+1)$-coloring of $G$, and the result follows.
\end{proof}

Next, we shall study the locating chromatic number of olive tree. This olive tree is a simpler counter example of Theorem 4.3 in \cite{CHR02} from the one given in \cite{COR15}.
The tightness of the upper bound of the maximum degree in Theorem \ref{Delta xl} and the lower bound in Theorem \ref{batasxl} is achieved by the olive tree. The upper bound in Theorem \ref{batasxl} is achieved by stars.

\vspace{12pt}
\hrule
\vspace{3pt}
\centerline{Algorithm 2. Coloring palms.}
\vspace{3pt}
\hrule
\vspace{3pt}
\noindent\textbf{Input :} Integers $n\geq 3$, $a_1,a_2,\cdots,a_n$.\\
\textbf{Output :} $c$, a coloring of $S_n(a_1,a_2,\cdots,a_n)$.
\begin{enumerate}
\item $k=\left\lceil\log_3\left(\frac{n}{4}\right)\right\rceil+3$.

\item For $i=1,2,\cdots,n$ write $i=4l+r$ with $r\in\{1,2,3,4\}$ and $l \in\mathbb{Z}$.
			
\item Write the number $l$ in the expression of $i=4l+r$ as a $(k-3)$-digit number in base $3$, allowing the first digit to be zero.

\item For two distinct integers $x$ and $y$, define an $(x,y)$-alternating sequence, as the sequence $\{x,y,x,y,\cdots\}$.
			
\item For $i=1,2,\cdots,n$, define an integer sequence $A_i=\{a^i_1,a^i_2,\cdots\}$ as follows.
\begin{enumerate}
  \item Write $i=4 l+r=4\times (l_{k}l_{k-1}\cdots l_5 l_4)_3 +r$; as in step 1.
				
  \item Initially, define $A_i$ for each $i$ as an $(x,y)$-alternating sequence with 
  $(x,y)=(2,1)$ if $r=1$, $(x,y)=(3,1)$ if $r=2$, $(x,y)=(2,3)$ if $r=3$, and $(x,y)=(3,2)$ if $r=4$.
\item For $t=4,5,\cdots, k$; if $l_t\ne0$, change the value of  $a^i_{2t+l_t-6}$ with $t$.
\end{enumerate}
		\item Assign $c(a_{0})=1$ and $c(a_{i,j})=a^i_j$ for $1 \leq j \leq a_i$, $i=1,2,\cdots,n$.
\end{enumerate}
\hrule
\vspace{6pt}
We give an example of Algorithm 2. If $n=108$, then $k=6$; write $i=57=4\times 14 + 1$ with $(14)_{10}=(112)_3$, $i=80=4\times 19 + 4$ with $(19)_{10}=(201)_3$, and $i=100=4\times 24 + 4$ with $(24)_{10}=(220)_3$, so $57=4\times(112)_3+1$, $80=4\times(201)_3+4$, and $100=4\times(220)_3+4$.

The sequences $A_i$ for $i=1,57,80,100$ are as follows. 
\begin{align*}
		A_{4\times (000)_3+1} = \{2,1,2,1,2,1,2,1,2,1,2,\cdots\}\qquad \qquad
		A_{4\times (112)_3+1} = \{2,1,2,4,5,1,6,1,2,1,2,\cdots\}\\
		A_{4\times (201)_3+4} = \{3,2,4,2,3,2,3,6,3,2,3,\cdots\}\qquad \qquad
		A_{4\times (220)_3+4} = \{3,2,3,2,3,5,3,6,3,2,3,\cdots\}
	\end{align*}

\begin{theorem}\label{xlon}
 For $n\geq 2$,  $\chi_L(O_n)=\left\lceil\log_3\left(\frac{n}{4}\right)\right\rceil+3$.
\end{theorem}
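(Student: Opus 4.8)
The plan is to prove matching lower and upper bounds. The lower bound $\chi_L(O_n) \geq \lceil \log_3(n/4)\rceil + 3$ is immediate: $O_n = S_n(1,2,\dots,n)$ is a palm, so Theorem \ref{batasxl} applies directly. (Alternatively, it follows from Theorem \ref{Delta xl} since $\Delta(O_n) = n$, forcing $n \leq 4\cdot 3^{\chi_L(O_n)-3}$.) So the entire content of the theorem is the upper bound: I must exhibit a locating $k$-coloring of $O_n$ with $k = \lceil \log_3(n/4)\rceil + 3$, and the natural candidate is the coloring produced by Algorithm 2 (specialized to $a_i = i$, though in fact Algorithm 2 is stated for general $a_i$ and one just restricts attention to the olive). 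So the real task is: verify that Algorithm 2 outputs a valid locating coloring.

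First I would record the structure of the coloring. By construction, $c(a_0)=1$; each end-path $i$ carries, away from finitely many corrected positions, an $(x,y)$-alternating pattern in $\{2,3\}\times\{1,2,3\}$ determined by $r = r(i) \in \{1,2,3,4\}$; and at positions $2t+l_t-6$ (for $t = 4,\dots,k$ with $l_t \neq 0$) a single vertex gets the "high" color $t$. The key combinatorial point is that the map $i \mapsto (r(i), (l_4,\dots,l_k))$ is a bijection from $\{1,\dots,n\}$ onto (a subset of) $\{1,2,3,4\} \times \{0,1,2\}^{k-3}$, which has $4\cdot 3^{k-3} \geq n$ elements by the choice of $k$ — so every end-path gets a distinct "signature."

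Next I would verify the locating property by splitting into cases. (i) Two vertices $u,v$ in the same end-path $i$: I claim they are resolved by some color class. If their depths have different parities they see color $1$ at different distances (using $c(a_0)=1$ and the alternating structure — being careful near the corrected positions). If same parity, I use that the finitely many "high-color" vertices $t$ on that path sit at prescribed depths, so $d(u, c^{-1}(t)) \neq d(v, c^{-1}(t))$ for a suitable $t$, or else one of the colors $2,3$ separates them. (ii) Two vertices $u \in$ end-path $i$, $v \in$ end-path $j$, $i \neq j$: here I use that the signatures differ. If $r(i) \neq r(j)$, the alternating patterns differ, and since both paths emanate from $a_0$ one of the colors $1,2,3$ resolves them (distance to the nearest occurrence of that color differs). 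If $r(i) = r(j)$ but $(l_4,\dots,l_k)$ differs, there is a coordinate $t$ with $l_t(i) \neq l_t(j)$; then the high color $t$ appears on path $i$ at a different depth than on path $j$ (or appears on only one of them), and one checks the distance from $u$ (resp. $v$) to $c^{-1}(t)$ distinguishes them — tracking both the "is there a $t$ on my own path" contribution and the "reach the $t$ on the other path via $a_0$" contribution. (iii) $a_0$ versus any $a_{i,j}$: handled as a subcase of the above.

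The main obstacle I anticipate is case (ii) with $r(i) = r(j)$: I need to show that differing digit vectors really do yield differing color codes, and the distance to $c^{-1}(t)$ is a minimum over possibly two competing occurrences of color $t$ (one on each path, reached through the hub), plus contributions from all the \emph{other} end-paths that also happen to use color $t$ — so I must argue the nearest $t$-colored vertex to $u$ is the one on $u$'s own path (when it exists) or, failing that, carefully compare the hub-routed distances. The choice of correction positions $2t+l_t-6$ (spreading the three possible "high-$t$" depths so the alternating pattern around them is undisturbed and the depths are distinguishable) is exactly what makes this bookkeeping go through, and I expect that verifying these depth inequalities for all pairs $t, t'$ and all digit values is the delicate part of the argument; everything else is routine parity and triangle-inequality checking.
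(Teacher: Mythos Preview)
Your overall strategy matches the paper's: the lower bound comes from the degree bound (Theorem~\ref{batasxl} / Theorem~\ref{Delta xl}), and the upper bound is established by checking that Algorithm~2 produces a locating coloring. The difference is in how the verification is organised. You split on whether the two vertices lie in the same end-path; the paper instead splits on whether they lie at the same \emph{level} (distance from the hub $a_0$), and this choice makes the argument substantially shorter.

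When the levels differ (the paper's Case~I), a single observation suffices regardless of which paths are involved: some color $s\in\{1,2,3\}$ is absent from the alternating pattern of at least one of the two paths in question (if the two pattern-sets $\{w,x\}$ and $\{y,z\}$ coincide, take the missing element of $\{1,2,3\}$; otherwise take a color present in one pattern but not the other). Since the high-color substitutions use colors $\geq 4$ and never touch positions $1$ or $2$, the distance from a vertex on a path lacking $s$ to $c^{-1}(s)$ is a strictly increasing function of its level, and the pair is resolved. This handles your entire case~(i) (same path forces different levels) and the unequal-level part of your case~(ii) in one stroke, with no parity bookkeeping and no high colors. What you flag as the ``main obstacle'' --- case~(ii) with $r(i)=r(j)$ and careful tracking of hub-routed distances to $c^{-1}(t)$ --- is only genuinely in play when the levels are \emph{equal} (the paper's Case~II), and there it is short: the nearest $t$-colored vertex to $a_{i,j}$ is the one on path~$i$ itself whenever $i_t\neq 0$ (the positions $2t+l_t-6$ for fixed $t$ differ by at most one across paths, so detouring through the hub is never shorter), and since $m_i+m_p=4t-9$ is odd, equal levels with $i_t\neq p_t$ give unequal distances to $c^{-1}(t)$. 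So your plan is correct, but reorganising the case split around levels rather than paths eliminates essentially all of the difficulty you anticipated.
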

\begin{proof}

Since $\Delta(O_n)=n$, then by Theorem \ref{batasxl} we have $\chi_L(O_n) \geq \left\lceil\log_3\left(\frac{n}{4}\right)\right\rceil+3$.
Now, construct a locating coloring on $O_n$ with $k=\left\lceil\log_3\left(\frac{n}{4}\right)\right\rceil+3$ colors by using Algoritm 2. We will prove that the color codes of all vertices are different.

Note that vertex $a_{0}$ is the only vertex whose color $1$ and has neighbors with colors $2$ and $3$, so its color code is different from the color codes of the other vertices. 
Let $a_{i,j}$ and $a_{p,q}$ be two different vertices and write $i=4\times (\overline{i_{k}i_{k-1}\cdots i_5 i_4})+r_1$ and $p=4\times (\overline{p_{k}p_{k-1}\cdots p_5 p_4})+r_2$ 
as in 4(a). 
Let $(w,x)$ and $(y,z)$ be the alternating coloring for $A_i$ and $A_p$ in Step 4(b).
Consider the following cases. 
	
\textbf{Case I: $\mathbf{j\ne q}$.} 
In this case, $a_{i,j}$ and $a_{p,q}$ are in different level. 
Without loss of generality, let $j>q$. 
If $\{w,x\}=\{y,z\}$, then the color $s\in\{1,2,3\}-\{w,x\}$ is not used in $A_i$ and $A_p$. 
Since $j\ne q$, then $d(a_{i,j},c_s)\ne d(a_{p,q},c_s)$. 
If $\{w,x\}\ne\{y,z\}$, then there is a color used in $A_i$ but not used in $A_p$ and vice versa. 
Let $s\in \{y,z\}-\{w,x\}$; $s$ is the color used in $A_p$ but not in $A_i$. 
Note that either $a_{p,1}$ or $a_{p,2}$ is colored by $s$, so $d(a_{p,q},c_s)<q<j<d(a_{i,j},c_s)$.
	
\textbf{Case II: $\mathbf{j=q}$.} 
Since $a_{i,j}$ and $a_{p,q}$ are in the same level, they must be in different 
end-paths; $i\ne p$. 
If there is a $t\ (4\leq t\leq k)$ with $i_t\ne p_t$, then the position of vertex with color $t$ is different in $A_i$ and $A_p$, so $d(a_{i,j},c_t)\ne d(a_{p,q},c_t)$. 
If $i_t=p_t$ for all $t$, then $r_1\ne r_2$, which means that $A_i$ and $A_p$ have different alternating colorings. 
If $w\ne y$ then these two colors will distinguish $r_c(a_{i,j})$ and $r_c(a_{p,q})$ because they are in the same level; 
A similar argument can be applied if $x\ne z$.

Thus, we have constructed a locating coloring of $O_n$ with $k=\left\lceil\log_3\left(\frac{n}{4}\right)\right\rceil+3$ colors. Therefore, $\chi_L(O_n)=\left\lceil\log_3\left(\frac{n}{4}\right)\right\rceil+3$.
\end{proof}

	To prove the second part of Theorem $\ref{batasxl}$, we need to prove the following lemma
\begin{lemma}\label{1}
	Let $G=S_n(a_1,a_2,\cdots,a_n)$ be a palm and $G'=S_n(a_1,a_2,\cdots,a_i+1,\cdots,a_n)$ then \[\chi_L(G')\geq\chi_L(G)-1.\]
\end{lemma}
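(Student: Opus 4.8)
The plan is to take a locating coloring $c'$ of $G'$ with $\chi_L(G')$ colors and produce from it a locating coloring of $G$ using at most $\chi_L(G')+1$ colors. Since $G$ is obtained from $G'$ by \emph{deleting} one leaf, namely the vertex $a_{i,a_i+1}$ of the extended end-path, the obvious first attempt is simply to restrict $c'$ to $V(G)$; the only thing that can go wrong is that some color is used only on the deleted vertex, or — more seriously — that two vertices of $G$ which had distinct color codes in $G'$ now collide in $G$. Because distances inside $G$ and inside $G'$ agree for all pairs of vertices of $G$ (paths in a tree between vertices of $G$ never pass through the extra leaf), the only way a color code can change when passing from $G'$ to $G$ is through the coordinate of a color class that was entirely contained in $\{a_{i,a_i+1}\}$ or that loses its unique nearest representative. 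So I would first argue that, after deleting the leaf, at most one color class ``disappears'' or has its witnessing behaviour altered, and introduce one fresh color to repair it.

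The cleaner way to organize this is as follows. Let $v = a_{i,a_i+1}$ be the leaf of $G'$ not in $G$, let $u = a_{i,a_i}$ be its neighbour, and set $t = c'(v)$. First I would handle the easy case: if $c'^{-1}(t) \cap V(G) \neq \emptyset$, i.e. color $t$ is used somewhere else in $G$, then I claim the restriction $c = c'|_{V(G)}$ is already a locating coloring of $G$. Indeed for any two vertices $x,y \in V(G)$ there is a color class $c'^{-1}(s)$ resolving them in $G'$; if $s \neq t$ the class is unchanged and $d_G(x, c'^{-1}(s)) = d_{G'}(x, c'^{-1}(s))$ for $x \in V(G)$, so it still resolves; and a short argument (using that $v$ is a leaf hanging off the end-path, so it is never strictly closer to $x$ than $u$ is, unless $x$ lies on that end-path beyond $u$, which is impossible) shows the class $c'^{-1}(t)$ also behaves correctly after removing $v$, or if not, one of the untouched classes already did the job. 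This shows $\chi_L(G) \le \chi_L(G')$, which is stronger than needed. In the remaining case color $t$ is used \emph{only} on $v$ in $G'$; then recolor: keep $c = c'|_{V(G)}$ but that leaves color $t$ unused, so we now have a coloring of $G$ with $\chi_L(G') - 1$ colors on $V(G)$ together with possibly one pair that became unresolved (the pair whose sole resolver in $G'$ was $c'^{-1}(t) = \{v\}$). Give $u$, or whichever single vertex is needed, the brand-new color $\chi_L(G')+1$...

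Actually the bound we must prove is only $\chi_L(G') \ge \chi_L(G) - 1$, i.e. $\chi_L(G) \le \chi_L(G') + 1$, so there is plenty of slack and the argument need not be tight. So the clean plan is: take an optimal locating coloring $c'$ of $G'$; form $c$ on $V(G)$ by restriction; if this is still a locating coloring of $G$ we are done with room to spare; otherwise the only obstruction is a single ``lost'' color class $c'^{-1}(t) = \{v\}$, and then recoloring the single vertex $u = a_{i,a_i}$ (the neighbour of $v$) with a fresh color $\chi_L(G')+1$ restores all color codes: any pair previously resolved by $c'^{-1}(t)$ is now resolved by the class $\{u\}$ of the new color, since $u$ lies on the path from $v$ to every vertex of $G$, so $d_G(x,u) = d_{G'}(x,v) - 1$ for all $x \in V(G)$, preserving the ordering that made $v$ a resolver; and no previously-resolved pair becomes unresolved because $u$'s old color was $t' = c'(u)$ and the class $c'^{-1}(t')$ minus $u$ — one checks it still resolves the pairs it used to, because in an end-path an interior vertex is never the unique nearest representative of its own color for two distinct other vertices, or if delicacy is needed, we can instead assign the fresh color to $v$... but $v \notin G$. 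The main obstacle, and the place where I expect to have to be careful, is exactly this verification that reassigning one vertex's color does not destroy any color code that was previously fine — i.e. controlling the effect on the class $c'^{-1}(t')$ that loses the vertex $u$. Handling that cleanly (perhaps by choosing more carefully \emph{which} vertex near the end of the $i$-th end-path receives the new color, or by appealing to Lemma~\ref{bridge} to localize the check to that one end-path) is the crux; everything else is bookkeeping with tree distances.
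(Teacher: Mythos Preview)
Your plan is exactly the paper's: take an optimal locating $p$-coloring $c'$ of $G'$, restrict it to $V(G)$, and recolor the vertex $z:=a_{i,a_i}$ (your $u$, the neighbour of the deleted leaf $w:=a_{i,a_i+1}$) with a fresh colour $p{+}1$. The preliminary case split on whether $c'(w)$ occurs elsewhere is a detour---as you yourself noticed, the ``easy'' case is not actually easy---so just recolor $z$ in every case.

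What you flag as ``the crux'' (that recoloring $z$ removes it from its old class $c'^{-1}(c'(z))$) dissolves once you write things down. Set $t=c'(w)$ and $t'=c'(z)$; then
\[
c'^{-1}(t') \;=\; c^{-1}(t')\cup\{z\},\qquad c'^{-1}(t) \;=\; c^{-1}(t)\cup\{w\},
\]
while $c'^{-1}(k)=c^{-1}(k)$ for all other $k\in\{1,\dots,p\}$. Since $w$ is a leaf pendant at $z$, every $x\in V(G)$ satisfies $d_{G'}(x,w)=d_G(x,z)+1$, hence
\[
d_{G'}\bigl(x,c'^{-1}(t')\bigr)=\min\bigl(d_G(x,c^{-1}(t')),\,d_G(x,z)\bigr),\qquad
d_{G'}\bigl(x,c'^{-1}(t)\bigr)=\min\bigl(d_G(x,c^{-1}(t)),\,d_G(x,z)+1\bigr).
\]
Now $d_G(x,z)=d_G\bigl(x,c^{-1}(p{+}1)\bigr)$ is itself a coordinate of $r_c(x)$. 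Therefore, if two vertices $x,y\in V(G)$ have $r_c(x)=r_c(y)$, every coordinate of $r_{c'}(x)$ and $r_{c'}(y)$ agrees as well, contradicting that $c'$ is locating. The single new colour at $z$ simultaneously repairs \emph{both} altered classes (the one that lost $w$ and the one that lost $z$); no further case analysis, and no appeal to Lemma~\ref{bridge}, is needed. This is precisely how the paper closes the argument, though it leaves the $t'$-coordinate verification implicit.
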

\begin{proof}
	Let $\chi_L(G')=p$ and $c'$ be a locating $p$-coloring of $G'$. We will construct a locating $(p+1)$-coloring of $G$, by doing so, we will have $\chi_L(G)\leq p+1$ and the result follows.
	
	Let $w$ be the only vertex in $G'$ but not in $G$, and $z$ the only neighbor of $w$. Define $c:V(G)\to \{1,2,\cdots,p+1\}$ with $c(z)=p+1$ and $c(v)=c'(v)$ if $v\ne z$. Without loss of generality let $c'(w)=1$. Suppose there are two different vertices $u$ and $v$ in $G$ with $r_c(u)=r_c(v)$, that means $d_G(u,c^{-1}(k))=d_G(v,c^{-1}(k))$ for $k=1,2,\cdots,p+1$.
	Since $d_G(u,c^{-1}(k))=d_{G'}(u,c^{-1}(k))$ for $k=2,3,\cdots,p$ and $r_{c'}(u)\ne r_{c'}(v)$, then $d_{G'}(u,c^{-1}(1))\ne d_{G'}(v,c^{-1}(1))$. 
	
	Without loss of generality, let $d_{G'}(u,c^{-1}(1))< d_{G'}(v,c^{-1}(1))$. Since $c'(w)=1$, we have $d_{G'}(u,c^{-1}(1))\leq d_{G'}(u,w)$, consider the following cases.\\
	{Case I : ${\ d_{G'}(u,c^{-1}(1))<d_{G'}(u,w)}$.}
	In this case, $d_{G}(u,c^{-1}(1))=d_{G'}(u,c^{-1}(1))$. This means, $d_{G'}(v,c^{-1}(1))\leq d_{G}(v,c^{-1}(1))=d_{G}(u,c^{-1}(1))=d_{G'}(u,c^{-1}(1))$, a contradiction.\\
	{Case II : ${\ d_{G'}(u,c^{-1}(1))=d_{G'}(u,w)}$.}
	In this case $d_{G'}(v,w)\geq d_{G'}(v,c^{-1}(1)) >d_{G'}(u,c^{-1}(1))=d_{G'}(u,w)$. This implies $d_G(v,c^{-1}(p+1))=d_G(v,z)=d_{G'}(v,w)-1>d_{G'}(u,w)-1=d_G(v,z)=d_G(v,c^{-1}(p+1))$, a contradiction.
	
	Thus $c$ is a locating $(p+1)$-coloring of $G$.
\end{proof}

Now we will prove the second part of Theorem \ref{batasxl}.
\begin{proof}[\bf\em Proof of Theorem \ref{batasxl} (2)]
	Define $$S_n=G_0\subseteq G_1 \subseteq \cdots \subseteq G_z=O_n=S_n(1,2,\cdots,n)$$ where $|G_{i+1}|=|G_{i}|+1$. Note that the previous sequence is not unique.	
	Consider the sequence $$n+1=\chi_L(G_0),\chi_L(G_1),\chi_L(G_2),\cdots,\chi_L(G_z)=\left\lceil\log_3\left(\frac{n}{4}\right)\right\rceil+3.$$
	
	From lemma \ref{1}, we have $\chi_L(G_{i+1})\geq \chi_L(G_i)-1$.
	Note that the previous sequence is decreasing in some of its terms because $n+1 \geq \left\lceil\log_3\left(\frac{n}{4}\right)\right\rceil+3$.
	Since each term is an integer and when it decrease, it can only decrease by $1$, the sequence will pass every integer between $\left\lceil\log_3\left(\frac{n}{4}\right)\right\rceil+3$ and $n+1$. This means for every integer $k$ between $\left\lceil\log_3\left(\frac{n}{4}\right)\right\rceil+3$ and $n+1$, there exist a palm $G_i$ with 
	$\chi_L(G_i)=k$. 
\end{proof}

By using Algorithm 2 and the arguments in the proof of Theorem \ref{xlon}, we get the following theorem. This also proves that the coloring in Figure \ref{S_36(5)} is a locating coloring.

\begin{theorem}
	Let $n\geq 2$ and $G=S_n(a_1,a_2,\cdots,a_n)$ be a palm with $1\leq a_1\leq a_2 \leq \cdots \leq a_n$. If $a_3\geq 2$, $a_{4\cdot3^k+1}\geq 2k+3$ and $a_{8\cdot 3^k+1}\geq 2k+4$ for all non negative integers $k\leq \log_3\left(\frac{n}{4}\right)$, then $\chi_L(G)=\left\lceil\log_3\left(\frac{n}{4}\right)\right\rceil+3$. $\square$
\end{theorem}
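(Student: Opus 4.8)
The lower bound needs no new work: since the hub $a_0$ has degree $n$, we have $\Delta(G)=n$, so Theorem~\ref{Delta xl} (equivalently the left-hand inequality of Theorem~\ref{batasxl}) gives $\chi_L(G)\ge\left\lceil\log_3\left(\frac{n}{4}\right)\right\rceil+3$, which I abbreviate to $k$. For the upper bound the plan is to run Algorithm~2 with this $k$ and argue that the resulting coloring $c$ is a locating coloring; since $c$ uses at most $k$ colors, this forces $\chi_L(G)=k$. The verification that $c$ is locating reuses, nearly word for word, the case analysis in the proof of Theorem~\ref{xlon}: to separate two vertices $a_{i,j}$ and $a_{p,q}$, treat Case~I ($j\ne q$, different levels) and Case~II ($j=q$, necessarily different end-paths), and in each case name a color class that resolves them from the structure of the sequences $A_i$; one also notes that $a_0$ is still the unique color-$1$ vertex with neighbours of colors $2$ and $3$.

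The content of the three hypotheses is that the proof of Theorem~\ref{xlon} quietly used $a_i=i$ in exactly two places, and these hypotheses restore precisely those uses. First, the resolving arguments in Case~I (when the alternating pairs of $A_i$ and $A_p$ differ) and all of Case~II repeatedly rely on a color $t$ with $4\le t\le k$ --- scheduled by Step~5(c) to sit at position $2t-6+l_t$ of end-path $i$ --- actually occurring there, which requires $a_i\ge 2t-6+l_t$. Since $(a_i)$ is non-decreasing, it is enough to check this at the smallest index carrying a given nonzero digit $l_t$: for $l_t=1$ that index is $4\cdot3^{t-4}+1$ with requirement $a_i\ge 2t-5$, and for $l_t=2$ it is $8\cdot3^{t-4}+1$ with requirement $a_i\ge 2t-4$; writing $m=t-4$ these are exactly $a_{4\cdot3^{m}+1}\ge 2m+3$ and $a_{8\cdot3^{m}+1}\ge 2m+4$ for $m\le\log_3(n/4)$. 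Second, the Case~I step that isolates the color appearing in $A_p$ but not in $A_i$ needs, when that color is the \emph{second} entry of the alternating pair of $A_p$ and lies in $\{2,3\}$, that this color genuinely occur in end-path $p$, i.e.\ that $a_p\ge 2$; but such an end-path has $r\in\{3,4\}$ in $i=4l+r$, hence index $p\ge 3$, so $a_3\ge 2$ together with monotonicity gives $a_p\ge 2$. (When the isolated color is instead $1$ it is furnished by the hub $a_0$, so no length condition is needed there.) All three conditions hold trivially when $a_i=i$, which is why Theorem~\ref{xlon} could omit them.

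Granting these two facts, I would rerun Cases~I and~II essentially verbatim, using the same distance identities as before: if $s\le 3$ is absent from end-path $i$ then $d(a_{i,j},c_s)=j+d(a_0,c_s)$, and if color $t$ occupies position $2t-6+l_t$ of end-path $i$ then $d(a_{i,j},c_t)=|j-(2t-6+l_t)|$, the competing distances through the hub being strictly larger. The hard part will be the bookkeeping at the boundary: when an end-path is only just long enough to realize a scheduled color, or when $q=1$, several of the strict inequalities as written in the proof of Theorem~\ref{xlon} (for instance $d(a_{p,q},c_s)<q$) degenerate to equalities, and one must re-derive the separation --- usually by comparing against the strictly larger distance travelling through $a_0$ --- to be sure no two color codes collide. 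Once those edge cases are disposed of, $c$ is a locating $k$-coloring, so $\chi_L(G)\le k$, and with the lower bound $\chi_L(G)=\left\lceil\log_3\left(\frac{n}{4}\right)\right\rceil+3$.
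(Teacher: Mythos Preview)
Your proposal is correct and follows precisely the approach the paper intends: the paper's own ``proof'' of this theorem is the single sentence that it follows ``by using Algorithm~2 and the arguments in the proof of Theorem~\ref{xlon}'', and you have supplied exactly that argument, together with an explicit accounting---which the paper omits---of where each hypothesis $a_3\ge2$, $a_{4\cdot3^{m}+1}\ge2m+3$, $a_{8\cdot3^{m}+1}\ge2m+4$ is consumed in the case analysis. Your identification of the boundary cases (degenerate inequalities when $q=1$ or an end-path is only just long enough) is accurate, and your suggested repair via the hub distance is the right one.
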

The following corollary is a special case for the previous theorem when $a_i=k$ for all $i$.
\begin{corollary}\label{>>}
	Let $n\geq 3$ and $k\geq 2 \left\lceil\log_3\left(\frac{n}{4}\right)\right\rceil+4$, then $\chi_L(S_n(k))=\left\lceil\log_3\left(\frac{n}{4}\right)\right\rceil+3$. $\square$
\end{corollary}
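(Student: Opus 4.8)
The plan is to obtain this corollary as an immediate specialization of the theorem just above to the regular palm, i.e.\ by putting $a_1=a_2=\cdots=a_n=k$ in $S_n(a_1,\dots,a_n)$, and then checking that the assumption $k\ge 2\lceil\log_3(n/4)\rceil+4$ forces every numerical condition of that theorem. To keep the notation unambiguous I would first restate those conditions with a fresh index $m$ in place of the theorem's summation variable (which is also written $k$ there): they ask that $a_3\ge 2$, that $a_{4\cdot 3^{m}+1}\ge 2m+3$, and that $a_{8\cdot 3^{m}+1}\ge 2m+4$, for every non-negative integer $m\le\log_3(n/4)$, with the convention that a condition referring to an index larger than $n$ is vacuous (since $S_n(k)$ has only $n$ end-paths).

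Next I would record the elementary fact that $\lceil\log_3(n/4)\rceil\ge 0$ whenever $n\ge 3$ (indeed $n/4\ge 1/2>3^{-1}$ gives $\log_3(n/4)>-1$), so that $k\ge 4$; in particular $a_3=k\ge 2$ and the monotonicity requirement $1\le a_1\le\cdots\le a_n$ hold trivially. For any non-negative integer $m$ with $m\le\log_3(n/4)$ we have $m\le\lfloor\log_3(n/4)\rfloor\le\lceil\log_3(n/4)\rceil$, whence
$$2m+3\le 2\left\lceil\log_3\!\left(\tfrac{n}{4}\right)\right\rceil+3<k\qquad\text{and}\qquad 2m+4\le 2\left\lceil\log_3\!\left(\tfrac{n}{4}\right)\right\rceil+4\le k.$$
Since every end-path of $S_n(k)$ has length exactly $k$, this shows $a_{4\cdot 3^{m}+1}=k\ge 2m+3$ and $a_{8\cdot 3^{m}+1}=k\ge 2m+4$ whenever these vertices exist. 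All hypotheses of the theorem are therefore satisfied, and it yields $\chi_L(S_n(k))=\lceil\log_3(n/4)\rceil+3$.

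The only subtlety — and the step I expect to require the most care, although it remains routine — is the bookkeeping for indices $4\cdot 3^{m}+1$ or $8\cdot 3^{m}+1$ that exceed $n$: one has to confirm that the theorem's list of conditions is genuinely meant to be read as vacuous in that situation, and that restricting the length vector to the constant vector $(k,\dots,k)$ does not silently drop any condition. Once that is settled, the rest is the inequality chase above, so no auxiliary lemma is needed.
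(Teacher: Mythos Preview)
Your proposal is correct and follows exactly the route the paper intends: the paper itself says only that the corollary ``is a special case for the previous theorem when $a_i=k$ for all $i$'' and gives no further details, so your verification that the hypothesis $k\ge 2\lceil\log_3(n/4)\rceil+4$ forces each of the conditions $a_3\ge 2$, $a_{4\cdot3^{m}+1}\ge 2m+3$, $a_{8\cdot3^{m}+1}\ge 2m+4$ is precisely the omitted check. Your renaming of the theorem's index variable to avoid the clash with the corollary's $k$ is a sensible clarification the paper does not make explicit.
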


\begin{theorem}
	Let $G=S_n(a_1,a_2,\cdots,a_n)$ be a palm, $\chi_L(G)=n+1$ if and only if $G$ is a star. \label{xl=n+1}
\end{theorem}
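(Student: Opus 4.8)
\medskip
\noindent\textbf{Proof proposal.}
The plan is to prove the two implications separately, the ``only if'' direction being the substantial one, which I would argue in contrapositive form: a palm $G=S_n(a_1,\dots,a_n)$ that is not a star satisfies $\chi_L(G)\le n$. (I take $n\ge 3$ here; for $n=2$ a palm is a path on at least three vertices, which has locating chromatic number $3$, so the statement is to be read for $n\ge 3$.) The ``if'' direction is quick: if $G=S_n$ is a star then $\chi_L(S_n)\le n+1$ by Theorem \ref{batasxl}, while for the reverse inequality observe that the $n$ leaves are pairwise at distance $2$ and all adjacent to the hub $a_0$, so in any proper colouring $c(a_0)$ avoids every leaf colour, and two equicoloured leaves would have equal color codes (distance $0$ to the common colour, $1$ to $c(a_0)$, $2$ to every other colour); hence the $n$ leaves need $n$ distinct colours, none equal to $c(a_0)$, and $\chi_L(S_n)=n+1$.

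For the contrapositive I would construct a locating $n$-colouring of a non-star palm $G$. Reorder the legs so that $a_1\ge\cdots\ge a_n$; then $a_1\ge 2$. Call a leg \emph{short} if it has length $1$ and \emph{long} otherwise. The key device is to reserve the colour $n$ for $a_0$ alone, so that the $n$-th coordinate of $r_c(v)$ is exactly the level (distance to $a_0$) of $v$; then vertices on different levels are resolved for free, and it suffices to resolve any two vertices $a_{i,\ell},a_{p,\ell}$ occupying the same position $\ell$ on two distinct legs. On leg $i$ I put an alternating pattern with colour $x_i$ on odd positions and $y_i$ on even positions, where $x_i,y_i\in\{1,\dots,n-1\}$ and $x_i\ne y_i$ — this is proper since $a_{i,1}=x_i\ne n=c(a_0)$ — choosing the pairs so that the short legs receive pairwise distinct values $x_i$ and any two long legs receive distinct pairs $(x_i,y_i)$. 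A short pigeonhole count shows such a choice exists for $n\ge 4$; since there are $n$ legs but only $n-1$ colours available for them, at least one pair of legs must share an $x$-value, and I arrange for that pair to contain leg $1$, which is long.

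To verify that $c$ is locating I only need to resolve $a_{i,\ell}$ and $a_{p,\ell}$ with $i\ne p$. If their colours at level $\ell$ differ, then the colour of $a_{i,\ell}$, at distance $0$ from $a_{i,\ell}$ but positive distance from $a_{p,\ell}$, resolves them. If they share a colour $c^\ast$, then — since not both legs are short, say leg $i$ is long — the other colour $c'$ of leg $i$ does not occur on leg $p$ (otherwise leg $p$ would have the same pair, or $c'=c^\ast$, both impossible), so a leg-neighbour of $a_{i,\ell}$ carries $c'$ and $d(a_{i,\ell},c^{-1}(c'))=1$, whereas $d(a_{p,\ell},c^{-1}(c'))\ge\ell+1\ge 2$; in particular this handles the unavoidable collision at level $1$ between leg $1$ and a leaf, since a leaf's only neighbour is $a_0$, coloured $n\ne c'$. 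Hence $\chi_L(G)\le n$, completing the contrapositive.

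I expect the main obstacle to be exactly this forced $x$-collision: with $n$ legs but only $n-1$ non-hub colours two legs must share their level-$1$ colour, and one must check that their patterns nonetheless separate all of their same-level vertices — which works because the colliding pair can always be chosen to contain a long leg, and a long leg carries its ``other'' colour at distance $1$ from each of its vertices while a foreign leg carries it only at distance $\ge\ell+1$. The one genuinely exceptional configuration is $n=3$ with all three legs long: there two colours cannot be spread over three legs with pairwise-distinct initial colours, so colour $3$ cannot be withheld from the legs, and one instead puts colour $3$ on the third leg — e.g.\ colouring the three legs by the patterns $(1,2,1,2,\dots)$, $(2,1,2,1,\dots)$, $(1,3,1,3,\dots)$ — and checks the finitely many same-position comparisons by hand. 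Settling that case directly finishes the proof.
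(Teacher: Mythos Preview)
Your argument is correct and follows the same contrapositive strategy as the paper: exhibit an explicit locating $n$-colouring of any non-star palm built from alternating two-colour patterns on the legs, and use the one guaranteed long leg to handle the forced level-$1$ collision. The paper's construction is considerably more economical, however: assuming $a_1\ge 2$ it sets $c(a_0)=1$, colours leg~$1$ by the pattern $2,3,2,3,\dots$, and every other leg $i\ge 2$ by the pattern $i,1,i,1,\dots$, declaring without further ado that this is locating. This single uniform colouring avoids your short/long bookkeeping, your pigeonhole existence step, and your separate treatment of the $n=3$ all-long case. What your write-up buys in return is a cleaner resolving mechanism (the hub colour is isolated, so the level is read off directly from one coordinate) and an explicit note---which the paper omits---that the statement must be read for $n\ge 3$, since every $S_2(a_1,a_2)$ is a path with $\chi_L=3$.
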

\begin{proof}
	The $(\Leftarrow)$ part is trivial. For the $(\Rightarrow)$ part, let $G$ be a palm which is not a star. Without loss of generality, let $a_1>1$. Let $c:V\to \{1,2,\cdots,n\}$ with $c(a_0)=1$, $c(a_{1,j})=2$ if $j$ is odd, $c(a_{1,j})=3$ if $j$ is even, $c(a_{i,j})=i$ if $j$ is odd, and $c(a_{1,j})=1$ if $j$ is even. Its not hard to see that $c$ is a locating $n$-coloring of $G$ and the result follows.
\end{proof}

\section{Regular palms}

In this section, we will study the order of the locating chromatic number of regular palms.
If we fix $n$ and consider $\chi_L(S_n(k))$ as a function of $n$, we have $\chi_L(S_n(k)) \to \left\lceil\log_3\left(\frac{n}{4}\right)\right\rceil+3$ as $k\to\infty$ from Corollary $\ref{>>}$. However, if we fix $k$ and consider $\chi_L(S_n(k))$ as a function of $n$, $\chi_L(S_n(k))$ is increasing and unbounded (Lemma \ref{monoton}).

Let $f$ and $g$ be two function with integer variable $n$.
We say that $f=O(g)$ if there exist $c>0$ such that $|f(n)|\leq c\ |g(n)|$ for large values of $n$, $f=\Omega(g)$ if $g=O(f)$, and $f=\Theta(g)$ if $f=O(g)$ and $f=\Omega(g)$. We also denote $f=o(g)$ if $\lim \frac{f}{g}=0$. Note that $\lim \frac{f}{g}=1$ is equivalent to $f=(1+o(1))g$.

The order of $\chi_L(S_n(k))$ is given in the following theorem.

\begin{theorem}\label{orderxl}
	For every fixed positive integer $k$, $\chi_L(S_n(k))=\Theta\left(n^\frac{1}{k}\right)$.
\end{theorem}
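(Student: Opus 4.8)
The plan is to establish the bound in two halves, $\chi_L(S_n(k))=\Omega(n^{1/k})$ and $\chi_L(S_n(k))=O(n^{1/k})$. For both I would exploit that the hub $a_0$ is a cut vertex separating each end-path from the rest of $S_n(k)$, which yields the following decomposition of distances: for any colouring $c$, writing $\sigma_i=(c(a_{i,1}),\dots,c(a_{i,k}))$ for the colour sequence of the $i$-th end-path,
$$d(a_{i,j},c^{-1}(s))=\min\bigl(d_{\mathrm{int}}(\sigma_i,j,s),\ j+e_i(s)\bigr),$$
where $d_{\mathrm{int}}(\sigma_i,j,s)=\min\{|j-t|:1\le t\le k,\ c(a_{i,t})=s\}$ depends only on $\sigma_i$ and $j$, and $e_i(s)=d\bigl(a_0,c^{-1}(s)\setminus\{a_{i,1},\dots,a_{i,k}\}\bigr)$ records how far colour $s$ lies from the hub once end-path $i$ is deleted.

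For the lower bound, let $m=\chi_L(S_n(k))$, fix a locating $m$-colouring $c$, and set $e(s)=d(a_0,c^{-1}(s))$. Then $e_i(s)=e(s)$ for every end-path $i$ unless $i$ is the unique end-path whose colour-$s$ vertices realise the distance $e(s)$ (with $a_0$ not coloured $s$); for each colour there is at most one such $i$, so at most $m$ end-paths are ``special'' in this sense. For a non-special $i$ the colour code $r_c(a_{i,1})$ is then a function of $\sigma_i$ alone (the internal part depends on $\sigma_i$, and $e_i(s)=e(s)$ is a global constant). Since $c$ is locating, the level-$1$ vertices of the non-special end-paths have pairwise distinct codes, hence pairwise distinct sequences $\sigma_i\in\{1,\dots,m\}^k$; there are at most $m^k$ of these, so $n\le m^k+m$, which for large $n$ forces $m\ge\tfrac12 n^{1/k}$.

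For the upper bound I would give an explicit colouring. The case $k=1$ is the star $K_{1,n}$ with $\chi_L=n+1=\Theta(n)$, so assume $k\ge 2$. Put $b=\lceil n^{1/k}\rceil$, colour $a_0$ with colour $1$, partition a palette $\{2,\dots,kb+1\}$ into $k$ blocks $C_1,\dots,C_k$ of size $b$, identify each end-path $i\in\{0,\dots,n-1\}$ with its $k$ base-$b$ digits $(d_1,\dots,d_k)$, and colour $a_{i,t}$ with the $d_t$-th element of $C_t$ (discarding any colour of $C_1$ that stays unused). Consecutive vertices lie in different blocks, so the colouring is proper and $a_{i,1}$ is not coloured $1$. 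To see it is locating, I would reconstruct a vertex from its code: $a_0$ is the only vertex with a $0$ in coordinate $1$; for $v=a_{i,j}$ one has $j=d(v,c^{-1}(1))$ since colour $1$ occurs only at the hub, and for each $t$ the quantities $d(v,c^{-1}(s))$ with $s\in C_t$ equal $|j-t|$ for the single colour $s=c(a_{i,t})$ and equal $j+t$ for every other $s\in C_t$ (such colours occur at level $t$ only); since $|j-t|<j+t$, this recovers $c(a_{i,t})$, hence $d_t$, hence $i$. Thus $\chi_L(S_n(k))\le kb+1=O(n^{1/k})$, and combining the two halves gives $\chi_L(S_n(k))=\Theta(n^{1/k})$.

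I expect the main obstacle to be the bookkeeping around $e_i(s)$: making precise that $e_i(s)$ can differ from the global $e(s)$ for only boundedly many end-paths in the lower bound (so that ``$r_c(a_{i,1})$ is a function of $\sigma_i$'' holds for all but $O(m)$ of them), and checking the analogous generic behaviour $e_i(s)=t$ for $s\in C_t$ in the upper bound — this is precisely where $k\ge 2$ enters, since it guarantees that each colour recurs among many end-paths. The remaining steps are routine.
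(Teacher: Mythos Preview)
Your proof is correct and shares the paper's core ideas---count end-path colour sequences for the lower bound, and colour level $t$ from a dedicated block for the upper bound---but the two executions diverge in opposite directions. For the lower bound the paper sidesteps your special/non-special bookkeeping entirely: if $\sigma_i=\sigma_j$ for $i\ne j$, then each path already supplies the nearest occurrence of every colour that the other might lose, so $e_i(s)=e_j(s)=e(s)$ for all $s$ automatically and $r_c(a_{i,1})=r_c(a_{j,1})$; hence all $n$ sequences are pairwise distinct and $n\le m^k$ with no extra $+m$ term. Your argument is sound but works harder than necessary, and the obstacle you anticipate (controlling how often $e_i(s)\ne e(s)$) simply does not arise once one makes this observation. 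For the upper bound the situation is reversed: the paper argues indirectly, assuming $\chi_L(S_n(k))=p+2$, exhibiting a locating $(p{+}1)$-colouring of $S_m(k)$ with $m=\prod_{t=1}^k|A_t|$ (the $A_t$ being residue classes modulo $k$), and then invoking a separate monotonicity lemma to force $n>m\ge(p/k-1)^k$; your direct base-$b$ construction with the reconstruction argument reaches the same bound more cleanly. One small slip: it is the high-order block (your $C_k$), not $C_1$, that may contain unused colours when $n<b^k$, but discarding them is harmless in either case.
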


We give the exact value of $\chi_L(S_n(k))$ for $k=1,2,3$ in the following Theorems.
\begin{theorem}\label{S12}
	For $n\geq 2$, $\chi_L(S_n(1))=n+1$ and $\chi_L(S_n(2))=\left\lceil\sqrt{n}\right\rceil+1$.
\end{theorem}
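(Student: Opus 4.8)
The plan is to treat the two equalities separately; the first costs almost nothing and all the substance is in $S_n(2)$. Indeed $S_n(1)=S_n(1,\dots,1)$ is just the star $K_{1,n}$, so $\chi_L(S_n(1))=n+1$ is exactly the ``$\Leftarrow$'' direction of Theorem~\ref{xl=n+1} (equivalently: the $n$ leaves are pairwise at distance $2$ and all at distance $1$ from the hub, so no color class resolves two of them unless they carry distinct colors, none of them the hub's, forcing $n+1$ colors). So from now on write the end-paths of $G:=S_n(2)$ as $a_0-a_{i,1}-a_{i,2}$, $i=1,\dots,n$, and put $k:=\chi_L(G)$.

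For the lower bound I would attach to a locating $k$-coloring $c$ the map $\phi(i)=(c(a_{i,1}),c(a_{i,2}))$ and show (i) $\phi$ is injective and (ii) its image has at most $(k-1)^2$ elements; together these give $n\le(k-1)^2$, hence $k-1\ge\lceil\sqrt n\,\rceil$, i.e. $k\ge\lceil\sqrt n\,\rceil+1$. Claim (ii) is immediate: $a_{i,1}$ is adjacent to both $a_0$ and $a_{i,2}$, so $c(a_{i,1})\notin\{c(a_0),c(a_{i,2})\}$, and the number of ordered pairs $(\alpha,\beta)$ with $\alpha\neq c(a_0)$ and $\alpha\neq\beta$ is exactly $(k-1)^2$. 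For (i), if $\phi(i)=\phi(j)$ with $i\ne j$ I compare the color codes of the leaves $a_{i,2}$ and $a_{j,2}$: they agree in the coordinate $c(a_{i,2})$ (both $0$) and in the coordinate $c(a_{i,1})$ (both $1$, witnessed by the respective middle neighbour), while for every remaining color $\ell$ the distance from a leaf to $c^{-1}(\ell)$ is $2$, $3$, or $4$ according as $c(a_0)=\ell$, or $\ell$ occurs on some middle vertex, or $\ell$ occurs only on leaves — a value that does not depend on which of the two leaves we picked — so $r_c(a_{i,2})=r_c(a_{j,2})$, contradicting that $c$ is locating.

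For the upper bound I would exhibit a locating coloring with $m+1$ colors, where $m:=\lceil\sqrt n\,\rceil$ (so $m^2\ge n$ and $n\ge m$). Colour $a_0$ with $m+1$; choose $n$ distinct ordered pairs $(\alpha_i,\beta_i)$ with $\alpha_i\in\{1,\dots,m\}$, $\beta_i\in\{1,\dots,m+1\}\setminus\{\alpha_i\}$, arranged so that each color $1,\dots,m$ occurs as some $\alpha_i$; set $c(a_{i,1})=\alpha_i$ and $c(a_{i,2})=\beta_i$. Verifying that all color codes differ is a short case check on the three vertex types: the key structural fact is that a middle vertex is at distance exactly $1$ from color $m+1$, while the hub is at distance $0$ and any leaf at distance $0$ or $2$ — this separates the three types at once, except for the hub versus a leaf colored $m+1$, which is settled because the hub sees every small color within distance $1$ whereas such a leaf has a small color at distance $\ge 3$. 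Two leaves, or two middle vertices, carrying distinct pairs are separated by one of the two colors of their pairs, using the same distance computation as in (i). This is in the same spirit as the verification in the proof of Theorem~\ref{xlon}.

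The main obstacle I anticipate is bookkeeping rather than a new idea. In the lower bound one must be careful that the ``tail'' of a leaf's color code is genuinely independent of the leaf — in particular that a color forcing distance $3$ really is realized at a middle vertex of a different index, which holds automatically since that color differs from $c(a_{i,1})$. In the upper bound one must chase the handful of degenerate sub-cases — a pair with $\beta_i=m+1$, or two pairs sharing one coordinate — in which the first separating color one reaches for coincidentally fails and a second color must be invoked. None of these is deep once the pair-map viewpoint is adopted.
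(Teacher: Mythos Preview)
Your proof is correct and follows essentially the same pair-encoding argument as the paper: both bound the number of admissible colour pairs $(c(a_{i,1}),c(a_{i,2}))$ by $(k-1)^2$ for the lower bound and exhibit $\lceil\sqrt n\rceil^2\ge n$ such pairs for the upper bound. The only cosmetic differences are that the paper detects the pigeonhole collision at the middle vertices $a_{i,1}$ rather than at the leaves $a_{i,2}$, and its verification of the constructed coloring is terser (same level resolved by a neighbour's colour, different levels resolved by $c^{-1}(p+1)$).
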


\begin{theorem}\label{S3}
	For integers $p\geq3$, let $f(p)=(p-1)\left\lfloor\frac{p^2}{4}\right\rfloor-\left\lfloor\frac{p^2-2p}{4}\right\rfloor$, then
	\begin{align}
	\chi_L(S_n(3))= p\ \Longleftrightarrow\ f(p-1)< n\leq f(p);\label{Sn3}
	\end{align}
	
	or simply $\chi_L\left(S_n(3)\right) =(1+o(1)) \sqrt[3]{4n}$.
\end{theorem}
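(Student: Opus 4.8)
\textbf{Proof proposal for Theorem \ref{S3}.}

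The plan is to prove both directions of the equivalence \eqref{Sn3} by pinning down exactly how many end-paths of length $3$ a locating $p$-coloring can accommodate, and then deriving the asymptotics from the closed form of $f$. The key object is the \emph{profile} of an end-path: for the $i^{th}$ end-path $a_0 a_{i,1}a_{i,2}a_{i,3}$ in a coloring $c$ with $c(a_0)=1$ (which we may always assume by permuting colors), the relevant data is the triple $(c(a_{i,1}),c(a_{i,2}),c(a_{i,3}))$, subject to the proper-coloring constraints $c(a_{i,1})\neq 1$, $c(a_{i,1})\neq c(a_{i,2})$, $c(a_{i,2})\neq c(a_{i,3})$. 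First I would show that two end-paths with the \emph{same} profile force a repeated color code: by Lemma \ref{bridge}-type reasoning the only colors that can separate vertices in different end-paths at the same level are those appearing on one path and not the other, so if the profiles coincide then $a_{i,3}$ and $a_{j,3}$ (or the level-$2$ or level-$1$ vertices) are unresolved. Hence $n$ is at most the number of admissible profiles that can \emph{coexist} in a single locating coloring — this is strictly smaller than the raw count of proper triples, because profiles also have to be pairwise ``compatible'' (two paths must be separated, not merely non-identical).

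The combinatorial heart is therefore a counting lemma: determine the maximum size $N(p)$ of a family of triples $(x_1,x_2,x_3)\in\{2,\dots,p\}\times\{1,\dots,p\}\times\{1,\dots,p\}$ that are proper and pairwise-resolvable, and show $N(p)=f(p)$. I expect the extremal family to be built by a greedy/structured construction mirroring Algorithm 2: use the two ``cheap'' colors $\{2,3\}$ (together with $1$ at the hub) to run alternating patterns down each path, and ``spend'' a higher color $t\in\{4,\dots,p\}$ at exactly one position of a path to create a path-specific separating color, counting how many positions $\times$ how many ways the residual $\{2,3,1\}$-pattern can be filled. The floor functions $\lfloor p^2/4\rfloor$ and $\lfloor (p^2-2p)/4\rfloor$ strongly suggest that the count splits into a ``both endpoints use a high color'' part and a ``one endpoint uses a high color'' part, each contributing a product of roughly $p/2$ by $p/2$ many choices — I would verify $f(p)-f(p-1)$ telescopes to exactly the number of \emph{new} profiles that become available when the palette grows from $p-1$ to $p$, which is the cleanest way to organize the induction. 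The matching upper bound $n\le f(p)$ comes from showing no locating $p$-coloring can realize more than $N(p)=f(p)$ pairwise-distinct resolvable profiles; here one argues, position by position along the three levels, that the color codes restricted to each level must be distinct, and bounds the number of jointly-consistent assignments.

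Granting the exact formula, Theorem \ref{xlon} and Lemma \ref{1} (and Lemma \ref{monoton}, referenced in the text) give monotonicity of $\chi_L(S_n(3))$ in $n$, so \eqref{Sn3} is equivalent to the single statement $\chi_L(S_{f(p)}(3))=p$ and $\chi_L(S_{f(p)+1}(3))=p+1$; the lower bound direction of this (a palm with more than $f(p)$ length-$3$ paths needs $>p$ colors) is exactly the counting upper bound above, and the upper bound direction is the explicit coloring. Finally the asymptotic $\chi_L(S_n(3))=(1+o(1))\sqrt[3]{4n}$ is pure algebra: $f(p)=(p-1)\lfloor p^2/4\rfloor-\lfloor(p^2-2p)/4\rfloor=\tfrac14 p^3+O(p^2)$, so $f(p-1)<n\le f(p)$ forces $p=(1+o(1))(4n)^{1/3}$. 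The main obstacle is unquestionably the counting lemma $N(p)=f(p)$ — specifically proving the \emph{upper} bound, i.e. that pairwise-resolvability (not just distinctness of profiles) cannot be exploited to pack in more than $f(p)$ paths; the construction side should follow the Algorithm-2 template without much trouble, and everything downstream of the formula is routine.
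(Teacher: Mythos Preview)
Your scaffolding is right --- reduce \eqref{Sn3} to ``a locating $p$-coloring of $S_n(3)$ exists iff $n\le f(p)$,'' prove one direction by counting and the other by construction, then read off the asymptotic from $f(p)=\tfrac14 p^3+O(p^2)$ --- but both of your concrete guesses for the two halves miss the actual mechanism, and that is where all the content lies.

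For the upper bound, the paper does \emph{not} try to count ``pairwise-resolvable profiles.'' It uses a much simpler reduction: in any locating $p$-coloring the $n$ level-$1$ vertices $x_1,\dots,x_n$ must already have pairwise distinct color codes, so $n$ is bounded by the number of possible codes at level $1$. The key structural move you are missing is to partition the non-hub colors by their minimum distance to the hub: set $A=\{j:d(v,c^{-1}(j))=1\}$, $B=\{j:d(v,c^{-1}(j))=2\}$, $C=\{j:d(v,c^{-1}(j))=3\}$ with sizes $\alpha,\beta,\gamma$. Once $A,B,C$ are fixed, the code of $x_i$ is determined by which of these sets $c(x_i),c(y_i),c(z_i)$ lie in (and, in some cases, which element), and a four-case analysis gives the exact count $\alpha(\alpha+\beta)(\beta+\gamma)+\alpha^2=\alpha(p-1-\gamma)(p-1-\alpha)+\alpha^2$. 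Maximizing this over integers forces $\gamma=0$ and $\alpha=\lceil p/2\rceil$, and the floors in $f(p)$ come out of the resulting $\lceil p/2\rceil\lfloor p/2\rfloor$ products --- not from a ``one high color vs.\ two high colors'' dichotomy. Your proposed inductive telescoping of $f(p)-f(p-1)$ is not needed once this parametrization is in hand.

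For the construction, the Algorithm-2 template (two alternating base colors $\{2,3\}$, one high color $t\in\{4,\dots,p\}$ per path as a marker) is the wrong model here: that scheme produces only $O(p)$ end-paths, not $\Theta(p^3)$. The paper's construction instead \emph{realizes the optimizer} of the count above: take $A=\{1,\dots,\lceil k/2\rceil\}$ for level $1$ and $B=\{\lceil k/2\rceil+1,\dots,k-1\}$ for level $2$, and list four explicit families $S_1,\dots,S_4$ of triples $(a,b,c)$ (roughly: $a\in A$, $b\in B$ or $b\in A\cup\{k\}$, $c$ constrained accordingly) whose total size is exactly $f(k)$. Verifying that this coloring is locating is a short case split on the parity of $d(u,c^{-1}(k))$. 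So the half--half split of the palette between levels, not a base-plus-marker scheme, is what drives both the bound and the matching construction.
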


We end this section with the following conjecture.

\begin{conjecture}
	For $k\geq 4$, $\chi_L(S_n(k)) =(1+o(1)) \left(\frac{k-1}{2}\right) \sqrt[k]{4n}$.
\end{conjecture}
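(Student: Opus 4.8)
\section*{Proof proposal}

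The plan is to prove matching asymptotic bounds on $\chi_L(S_n(k))$ in both directions. Write $p=\chi_L(S_n(k))$; the asserted equality $p=(1+o(1))\tfrac{k-1}{2}(4n)^{1/k}$ is equivalent to saying that $p$ colours can locate $S_n(k)$ precisely when $n\le(1+o(1))\,2^{k-2}\bigl(\tfrac{p}{k-1}\bigr)^{k}$, so the whole statement reduces to counting, up to a $(1+o(1))$ factor, the maximum number of end-paths that a locating $p$-colouring of a regular palm can support. The formula is consistent with the known cases (Theorem \ref{S12} gives $(1+o(1))\sqrt n$ for $k=2$ and Theorem \ref{S3} gives $(1+o(1))\sqrt[3]{4n}$ for $k=3$), and $\Theta(n^{1/k})$ is Theorem \ref{orderxl}; what is open for $k\ge 4$ is the exact constant $\tfrac{k-1}{2}$.

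For the lower bound on $\chi_L$ I would fix a locating $p$-colouring $c$ with $c(a_0)=1$ and record, for each end-path $i$, its colour word $\sigma_i=(c^i_1,\dots,c^i_k)$. The main tool is a clamping identity: if $D^{(i)}_m$ denotes the distance from $a_0$ to the nearest colour-$m$ vertex not on path $i$, then $d(a_{i,j},c^{-1}(m))=\min\{\delta^i_{m,j},\ j+D^{(i)}_m\}$, where $\delta^i_{m,j}$ is the distance from position $j$ to the nearest occurrence of colour $m$ on path $i$. Hence a vertex at level $j$ only ``sees'' a bounded window of its own path, and colours that already occur near the hub (small $D_m$) carry less locating information than colours first appearing deep in a path. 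Carrying this out level by level gives, for each $j$, an upper bound on the number of admissible words: at the leaf level a word is pinned down only by its maximum-position profile; at an interior level the clamping collapses the information to an unordered pair of colours plus one distinguished colour; and the levels nearest the hub force $c^i_1$, and relatedly $c^i_k$, into restricted colour classes. Combining these constraints and optimising should recover the exact count $f(p)\approx(p-1)\lfloor p^2/4\rfloor$ for $k=3$ as in Theorem \ref{S3}, and in general a product of $k$ factors of total size $2^{k-2}(p/(k-1))^{k}$; the technical heart is to show that the palette must essentially split into $k-1$ colour classes of size $\approx p/(k-1)$, with the colour used at each position of a word drawn from a prescribed union of consecutive classes.

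For the upper bound on $\chi_L$ I would generalise Algorithm 2. Partition $\{1,\dots,p\}$ into $k-1$ classes $C_1,\dots,C_{k-1}$ of nearly equal size $\approx p/(k-1)$, and assign to the $i$-th end-path a colour word in which positions $1$ and $k$ draw from $C_1$ and $C_{k-1}$ respectively (one class each) while each of the $k-2$ interior positions draws from a union of two consecutive classes, so that its alphabet has size $\approx 2p/(k-1)$; this already yields $(1-o(1))\,2^{k-2}(p/(k-1))^{k}$ admissible words, consecutive entries differing automatically up to a negligible loss. Colouring $a_0$ by $1$ and the end-paths by distinct such words, one checks — along the lines of the proof of Theorem \ref{xlon}, splitting into ``same path, different levels'', ``different paths, same level'' and ``different paths, different levels'' — that all colour codes differ; here the ladder structure of the classes is exactly what makes two distinct words differ in a colour that the relevant level-$j$ vertices can see. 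Since $\Theta(n^{1/k})$ and the cases $k\le 3$ are already available, an induction on $k$ reducing path length $k$ to $k-1$ (in the spirit of Lemma \ref{1}) is also worth attempting for one or both directions.

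The principal obstacle is the constant $\tfrac{k-1}{2}$ itself: one must solve, exactly up to lower-order terms, the extremal problem of how many length-$k$ words over a $p$-letter alphabet can pairwise satisfy the locating condition of a regular palm. Two features make this hard. First, the level giving the binding constraint in the lower bound depends on $k$, so the per-level estimates must be obtained uniformly over all $k$ levels and then jointly optimised, while the small interaction terms — the discrepancy between $D^{(i)}_m$ and the global $D_m$, and the asymmetry between colours that occur at position $1$ and those that do not, which is already responsible for the $-\lfloor(p^2-2p)/4\rfloor$ correction when $k=3$ — must be controlled. Second, the olive-tree construction does not transfer directly: in $O_n$ the long paths absorb many special colours, whereas in $S_n(k)$ every path has the same short length $k$ yet must still carry $\approx p^{k}$ distinct colour codes, so one needs a family of words that remain mutually locating at all $k$ levels simultaneously, not merely at the leaf level. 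Producing and verifying a clean recursive description of such an extremal family of words for general $k$ is the crux of the argument.
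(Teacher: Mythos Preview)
The statement you are attempting is a \emph{Conjecture} in the paper; the authors do not prove it and offer no proof sketch beyond the supporting evidence of Theorems \ref{orderxl}, \ref{S12}, and \ref{S3}. So there is no ``paper's own proof'' to compare your proposal against.

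As for the proposal itself, it is not a proof but a strategic outline, and you candidly flag this: phrases such as ``should recover'', ``worth attempting'', and your final paragraph identifying ``the principal obstacle'' make clear that the two decisive steps are not carried out. Specifically, (i) for the lower bound you assert that the per-level clamping constraints, once ``jointly optimised'', force the palette into $k-1$ blocks of size $\approx p/(k-1)$ and yield the product $2^{k-2}(p/(k-1))^k$, but you give no mechanism that actually forces this block structure on an arbitrary locating colouring; and (ii) for the upper bound you describe a plausible word family but do not verify that distinct words are distinguished at \emph{every} level, which, as you note, is exactly where the $k=3$ argument already needs the careful case analysis of Theorem \ref{S3}. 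Until those two steps are executed, what you have is a well-motivated heuristic for why the constant $\tfrac{k-1}{2}$ is the right guess, not a proof of the conjecture.
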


\begin{center}
	{\bf Proof of the theorems \ref{orderxl}, \ref{S12}, and \ref{S3}}
\end{center}
To prove Theorem \ref{orderxl} we need to prove the following lemma.

\begin{lemma}\label{monoton}
	Let $k$ be a positive integer and $n\geq 3$, then $\chi_L(S_n(k))$ is increasing and unbounded as a function of $n$.
\end{lemma}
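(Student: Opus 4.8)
The plan is to prove Lemma~\ref{monoton} in two parts: first that $\chi_L(S_n(k))$ is monotonically nondecreasing in $n$, and second that it is unbounded. For monotonicity, I would show that $S_n(k)$ embeds into $S_{n+1}(k)$ in a way that lets any locating coloring of the larger graph restrict to a locating coloring of the smaller one — but in fact the cleaner route is the reverse direction already developed in the paper. Observe that $S_{n+1}(k) = S_{n+1}(k,k,\cdots,k)$ can be obtained from $S_n(k)$ by adding one more end-path of length $k$. Alternatively, and more directly, I would use Lemma~\ref{1}: since $S_{n}(k,k,\dots,k,k-1)$ (with $n$ arms, one shortened) can be grown arm-by-arm, and $S_{n}(k,\dots,k)$ contains $S_{n-1}(k,\dots,k)$ as a subtree hanging off the hub. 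The key observation is that if $H$ is obtained from $G$ by deleting a whole pendant end-path at the hub, then $\chi_L(H) \le \chi_L(G)$: given a locating coloring $c$ of $G$, restrict it to $H$; two vertices of $H$ that had distinct color codes in $G$ via some class $c^{-1}(i)$ either still do (if that class meets $H$) or — if the distinguishing class lay entirely in the deleted arm — one checks that some other class must then distinguish them, because $c$ restricted to $H$ uses at least the colors appearing on the hub-path portion and a short argument via Lemma~\ref{bridge} (the hub edge to the deleted arm is a bridge) forces a suitable color class to remain available. I would make this precise: deleting a pendant path $P$ attached at $a_0$, any color $i$ with $c^{-1}(i)\subseteq V(P)$ can be dropped, and two vertices $u,v\in V(H)$ resolved only by such an $i$ would have to satisfy $d(u,a_0)\ne d(v,a_0)$, but then they are already resolved inside $H$ by any color class of $H$ meeting a different arm or the hub — so $\chi_L(H)\le\chi_L(G)$, giving $\chi_L(S_{n-1}(k))\le \chi_L(S_n(k))$.

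For unboundedness, the clean tool is Theorem~\ref{Delta xl} (equivalently Theorem~\ref{batasxl}): since $\Delta(S_n(k)) = n$ (the hub has degree $n$), any locating coloring with $p$ colors forces $n \le 4\cdot 3^{p-3}$, hence $p \ge \log_3(n/4)+3 \to \infty$ as $n\to\infty$. Thus $\chi_L(S_n(k)) \ge \lceil \log_3(n/4)\rceil + 3$, which is unbounded, and combined with monotonicity this finishes the lemma. In fact this single inequality already yields "unbounded," and monotonicity is what one additionally needs for the sequence argument used later in proving $\chi_L(S_n(k)) = \Theta(n^{1/k})$ (so that the $\Omega$ and $O$ bounds, which might be established at a sparse set of $n$, propagate to all $n$).

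The main obstacle is the monotonicity half — specifically, verifying carefully that when we delete a pendant end-path at the hub, no pair of surviving vertices loses all of its resolving color classes. The subtlety is that the deleted arm may be the unique place where some color appears, so its color code coordinate is lost for every vertex; one must argue that whenever that lost coordinate was the \emph{only} one distinguishing two vertices $u,v$, those two vertices differ in distance to the hub, and then invoke that every remaining color class (there is at least one in another arm, since $n\ge 3$) gives them distinct distances — because distance to any fixed vertex set on the "other side" of the hub equals (distance to hub) plus (distance from hub to that set), so it is injective in the hub-distance coordinate along a single arm. Handling the case where $u,v$ lie in the same surviving arm versus different arms, and the boundary case where one of them is the hub itself, are the fiddly points; everything else is routine.

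Another viable route for monotonicity, which I would keep in reserve, is to go through the explicit formulas of Theorems~\ref{S12} and~\ref{S3} for $k=1,2,3$ and Corollary~\ref{>>} / Theorem~\ref{orderxl} for the asymptotic regime, but that is circular if Lemma~\ref{monoton} is used to prove Theorem~\ref{orderxl}; so the self-contained deletion argument above is the one I would actually write.
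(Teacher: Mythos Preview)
Your unboundedness argument (via Theorem~\ref{batasxl}) is correct and matches the paper. The monotonicity argument, however, has a real gap: it is \emph{not} true that deleting an arbitrary arm from a locating coloring of $S_n(k)$ always leaves a locating coloring of $S_{n-1}(k)$. For a concrete failure, take $S_3(5)$ with the hub colored $1$ and the three arms colored (from the hub outward) $(2,3,2,3,2)$, $(2,1,2,3,2)$, and $(4,1,4,1,4)$; one checks this is a locating $4$-coloring, but deleting the third arm (the only one containing colour~$4$) leaves the level-$2$ vertex of the first arm and the level-$4$ vertex of the second arm with identical code $(2,1,0)$. Your case analysis only addresses colors lying entirely inside the deleted arm, and the claimed rescue---that if $d(u,a_0)\ne d(v,a_0)$ then ``any color class of $H$ meeting a different arm or the hub'' distinguishes $u$ from $v$---is precisely what fails here: every surviving color class meets $H$, yet none separates these two vertices. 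The ``fiddly points'' you flag are not bookkeeping; they are where the argument actually breaks.

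The paper sidesteps this by choosing \emph{which} arm to delete. Take an optimal locating $p$-coloring with the hub colored $p$; for each color $i<p$ fix a reference vertex of color $i$ at minimum distance to the hub. Since $p\le n$ (by Theorems~\ref{batasxl} and~\ref{xl=n+1}), at most $p-1<n$ arms contain a reference vertex, so some arm contains none. Deleting that arm leaves every surviving vertex's color code literally unchanged---the nearest representative of each color, measured from the hub, is still present---so the restriction is trivially a locating $p$-coloring of $S_{n-1}(k)$. This specific choice of arm is the missing idea in your plan.
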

\begin{proof}
	The case for $k=1$ is clear. Let $k\geq 2$ and $n\geq 3$, we will prove that $\chi_L(S_{n}(k))\geq \chi_L(S_{n-1}(k))$.
	Let $\chi_L(S_{n}(k))=p$, then $p\leq n$ by Theorems \ref{batasxl} and \ref{xl=n+1}. Let $c$ be a locating $p$-coloring of $S_n(k)$ with $p$ as the color of the center. For $i=1,2,\cdots,p-1$; choose one vertex for color $i$ with the smallest distance to $a_0$, if there is more than one vertex, choose one arbitrary, this vertex is called the reference vertex of color $i$.
	Therefore we have $p-1$ chosen reference vertices. Since $p\leq n$, there exist an end-path not containing any reference vertex, if we remove this end-path, the remaining graph (with the remaining coloring) is a locating $p$-coloring of $S_{n-1}(k)$, because the color code for all vertices does not change. Therefore, $\chi_L(S_{n-1}(k))\leq p=\chi_L(S_{n}(k))$. The unbounded property comes from Theorem \ref{batasxl}.
\end{proof}

\begin{proof}[\bf\em Proof of Theorem \ref{orderxl}]
	We will prove that there exist $A,B>0$ such that
	\begin{align}\label{theta}
	A n^{\frac{1}{k}} \leq \chi_L(S_n(k)) \leq B n^{\frac{1}{k}}
	\end{align}
	for large values of $n$.
	
	First, we prove the second inequality in (\ref{theta}).
	Let $n\geq 4\cdot 3^{2k+1}$ and $\chi_L(S_n(k))=p+2\geq 2k+4$ from Theorem \ref{batasxl}. 
	For $i=1,2,\cdots,p$; let $A_i=\{a\in\{1,2,\cdots,p\} \mid a \equiv i\ \text{mod}\ k \}$.
	Let $m=\prod_{i=1}^k |A_i|$, we will construct a locating $(p+1)$-coloring $c$ of $S_m(k)$, as follows.
	\begin{enumerate}
		\item Let $c(a_0)=p+1$.
		\item Arrange the elements in $\mathbb{A}=A_1 \times A_2 \times \cdots \times A_k$ by their lexicographic order.
		\item For $i=1,2,\cdots,m$;
		let $\left(a_0,a_{i,1},a_{i,2},\cdots,a_{i,k}\right)$ be the $i^{th}$ end-path of $S_m(k)$, and\linebreak $(\alpha_{i1},\alpha_{i2},\cdots,\alpha_{ik})$ be the $i^{th}$ element in $\mathbb{A}$, then define $c(a_{i,j})=\alpha_{ij}$.
	\end{enumerate}
	It is easy to verify that the previous coloring is a locating $(p+1)$-coloring of $S_m(k)$, thus $\chi_L(S_m(k))\leq p+1<\chi_L(S_n(k))$. By Lemma \ref{monoton}, we have  $n>m$, therefore
	\begin{align*}
	n> \prod_{i=1}^k |A_i|\geq \prod_{i=1}^{k} \left\lfloor\frac{p}{k}\right\rfloor \geq \left(\frac{p}{k}-1\right)^k \geq \left(\frac{p+2}{2k}\right)^k,
	\end{align*}
	which is equivalent to $p+2\leq (2k) n^{\frac{1}{k}}$. Thus, the bound in (\ref{theta}) is satisfied for $n\geq 4\cdot 3^{2k+1}$, and $B=2k$.
	
	Now, we only need to prove the first inequality in (\ref{theta}). Let $q=\chi_L(S_n(k))$. There are $k$ vertices in an end-path without the center of $S_n(k)$. In a resolving $q$-coloring, each vertex has $q$ possible color, therefore there are at most $q^k$ possible ways to color an end-path. Since two different end-paths cannot have the same coloring, then there are at most $q^k$ end-paths, thus $n\leq q^k$ which is equivalent to $n^{\frac{1}{k}}\leq q=\chi_L(S_n(k))$. 
	Hence, (\ref{theta}) is true for $A=1$.
\end{proof}

\begin{proof}[\bf\em Proof of Theorem \ref{S12}]
	For $k=1$, $S_n(k)=S_n$ and the result follows. Let $\lceil\sqrt{n}\rceil=p$, we will prove that $\chi_L(S_n(2))=p+1$.
	
	First, we construct a locating $(p+1)$-coloring of $S_n(2)$. Let $A=\{(x,y) \mid x\in \{1,2,\cdots,p\}, y\in\{1,2,\cdots,p+1\}-\{x\}\}$, then $|A|=p^2\geq n$. 
	Define a coloring $c$ with $c(a_0)=p+1$, $c(a_{i,1})=x_i$, and $c(a_{i,2})=y_i$; where $(x_i,y_i)$ is the $i^{th}$ element in $A$ (based on lexicographic order). Note that $c$ is a locating $(p+1)$-coloring of $S_n(2)$, because vertices in the same level is resolved by the color of their neighbors, and vertices in different level is resolved by $c^{-1}(p+1)$.
	
	To prove that $(p+1)$ is minimum, let $c'$ be a $q$-coloring of $S_n(2)$ with $c(a_0)=q$ and $q\leq p$. We will prove that $c'$ is not a locating coloring. Let $A'=\{(c'(a_{i,1}),c'(a_{i,2})) \mid i=1,2\cdots,n\}$. Note that $A'\subseteq \{(x,y) \mid x\in \{1,2,\cdots,p-1\}, y\in\{1,2,\cdots,p\}-\{x\}\}$, so $|A'|\leq (p-1)^2<n$. This means, there are two different indices $i$ and $j$, such that $(c'(a_{i,1}),c'(a_{i,2}))=(c'(a_{j,1}),c'(a_{j,2}))$. Therefore $a_{c'}(a_{i,1})=a_{c'}(a_{j,1})$ and thus $c'$ is not a locating coloring of $S_n(2)$.
\end{proof}

\begin{proof}[\bf\em Proof of Theorem \ref{S3}]
	Note that $f(p)=(p-1)\left\lfloor\frac{p^2}{4}\right\rfloor-\left\lfloor\frac{p^2-2p}{4}\right\rfloor=\left\lceil \frac{p}{2} \right\rceil(p-1)\left\lfloor \frac{p}{2}-1 \right\rfloor+\left\lceil \frac{p}{2} \right\rceil^2$ is a strictly increasing function for $p\geq 3$. So, for $n\geq 2$ there is a unique $p$ such that $f(p-1)< n\leq f(p)$.
	Proving (\ref{Sn3}) is equivalent to proving $\chi_L(S_n(3))= p$ where $p$ is the smallest integer such that $n\leq f(p)$.
	
	Let $n\geq2$ and $\chi_L(S_n(3))=p$.
	First, we will prove that $n\leq f(p)$.
	Let $V(S_n(3))=\{v\}\cup\{x_i,y_i,z_i \mid i=1,2,\cdots,n\}$ and for $i=1,2,\cdots,n$; the subrgaph induced by $\{v,x_i,y_i,z_i\}$ is a path.
	Let $c$ be a locating $p$-coloring of $S_n(3)$ with $c(v)=p$. Let $A=\{j\mid d(v,c^{-1}(j))=1\}$, $B=\{j \mid d(v,c^{-1}(j))=2\}$, and $C=\{j\mid d(v,c^{-1}(j))=3\}$; also $|A|=\alpha$, $|B|=\beta$, and $|C|=\gamma$. 
	
	Now, we will count the number of possible color code for $x_i$. We know that $c(x_i)\in A$ and $c(y_i)\in A\cup B\cup\{p\}$. Let $a_c(x_i)=(a_1,a_2,\cdots,a_p)$, then $a_p=1$, $a_j=2$ for $j\in A\backslash \{c(x_i),c(y_i)\}$, $a_j=3$ for $j\in B\backslash \{c(y_i),c(z_i)\}$, and $a_j=4$ for $j\in C\backslash\{c(z_i)\}$. So, for fix $A$, $B$, and $C$, the color code of $x_i$ depends only on $c(x_i)$, $c(y_i)$, and $c(z_i)$.
	
	\textbf{(i)} If $c(y_i)\in B$ and $c(z_i)\in A\cup \{p\}$, then there are $\alpha$ possible choices for $c(x_i)$ and $\beta$ possible choices for $c(y_i)$. So, there are $\alpha\beta$ possible values for $a_c(x_i)$; note that the value of $a_c(x_i)$ does not change for different values of $c(z_i)\in A\cup \{p\}$.
	\textbf{(ii)} If $c(y_i)\in B$ and $c(z_i)\in (B\cup C)\backslash \{c(y_i)\}$, then there are $\alpha$ possible choices for $c(x_i)$, $\beta$ possible choices for $c(y_i)$, and $\beta+\gamma-1$ possible choices for $c(z_i)$. So, there are $\alpha\beta(\beta+\gamma-1)$ possible values for $a_c(x_i)$.
	\textbf{(iii)} If $c(y_i)\in (A\cup\{p\})\backslash\{c(x_i)\}$ and $c(z_i)\in (A\cup\{p\})\backslash\{c(y_i)\}$, then there are $\alpha$ possible choices for $c(x_i)$ and $\alpha$ possible choices for $c(y_i)$. So, there are $\alpha^2$ possible values for $a_c(x_i)$; note that the value of $a_c(x_i)$ does not change for different values of $c(z_i)\in A\cup \{p\}\backslash\{c(y_i)\}$.
	\textbf{(iv)} If $c(y_i)\in (A\cup\{p\})\backslash\{c(x_i)\}$ and $c(z_i)\in (B\cup C)$, then there are $\alpha$ possible choices of $c(x_i)$, $\alpha$ possible choices for $c(y_i)$, and $\beta+\gamma$ possible values for $c(z_i)$. So, there are $\alpha^2(\beta+\gamma)$ possible values for $a_c(x_i)$.
	
	Since $\alpha+\beta+\gamma=p-1$, the total number of possible values for $a_c(x_i)$ is $\alpha\beta+\alpha\beta(\beta+\gamma-1)+\alpha^2+\alpha^2(\beta+\gamma)=\alpha(\alpha+\beta)(\beta+\gamma)+\alpha^2=\alpha(p-1-\gamma)(p-1-\alpha)+\alpha^2$.
	This value is maximized by taking $\gamma=0$, so the number of possible values for $a_c(x_i)$ is at most $\alpha(p-1)(p-1-\alpha)+\alpha^2$ which is maximized when $\alpha=\frac{p}{2}+\frac{1}{2p-4}$. Since $\alpha$ must be an integer and the closest integer to $\frac{p}{2}+\frac{1}{2p-4}$ is $\left\lceil \frac{p}{2} \right\rceil$, then the number of possible values for $a_c(x_i)$ is at most 
	\[
	\left\lceil \frac{p}{2} \right\rceil(p-1)\left\lfloor \frac{p}{2}-1 \right\rfloor+\left\lceil \frac{p}{2} \right\rceil^2
	=
	(p-1)\left\lceil \frac{p}{2} \right\rceil\left\lfloor \frac{p}{2} \right\rfloor - \left\lceil \frac{p}{2} \right\rceil\left(p-1-\left\lceil \frac{p}{2} \right\rceil\right)
	=
	(p-1)\left\lfloor\frac{p^2}{4}\right\rfloor-\left\lfloor\frac{p^2-2p}{4}\right\rfloor.
	\]
	Therefore, $n\leq f(p)$.
	
	Now, we prove that $p$ is the smallest integer such that $n\leq f(p)$. Suppose otherwise, let $k$ be an integer such that $n\leq f(k)$ and $k<p$. We will construct a locating $k$-coloring of $S_n(3)$, which will contradict $\chi_L(S_n(3))=p$.
	
	Let $A=\left\{1,2,\cdots,\left\lceil \frac{k}{2} \right\rceil\right\}$ and $B=\left\{\left\lceil \frac{k}{2} \right\rceil+1,\left\lceil \frac{k}{2} \right\rceil+2,\cdots,k-1\right\}$. Let $S_1=\{(a,b,a) \mid a\in A; b\in B\}$, $S_2=\{(a,b,c)\mid a\in A; b,c\in B; b\ne c\}$, $S_3=\{(a,b,a)\mid a,b\in A\cup\{k\}; b\ne a\ne k\}$, $S_4=\{(a,b,c)\mid a,b\in A\cup\{k\}; c\in B; b\ne a\ne k\}$, and $S=S_1\cup S_2\cup S_3\cup S_4$. Note that $|S|=\alpha\beta+\alpha\beta(\beta-1)+\alpha^2+\alpha^2\beta$ where $\alpha=|A|$ and $\beta=|B|$, so $|S|=f(k)\geq n$.
	Let $c$ be a coloring of $S_n(3)$ as follows.
	
	\begin{enumerate}
		\item Color $v$ with $k$.
		\item Arrange the elements of $S$ from $S_1$ to $S_4$.
		\item For $i=1,2,\cdots,n$; let $(a_i,b_i,c_i)$ be the $i^{th}$ element in $S$ (based on the previous ordering), and color $x_i$ with $a_i$, color $y_i$ with $b_i$, and color $z_i$ with $c_i$.
	\end{enumerate}
	
	Now we prove that $c$ is a locating coloring. Note that $\{w\in V(S_n(3)) \mid c(w)=k\}\subseteq \{v,y_1,y_2,\cdots,y_n\}$ and $c(z_i)\in A \Rightarrow c(z_i)=c(x_i)$. 		
	By contradiction, let $u,w$ be two vertices with $a_c(u)=a_c(w)$. Consider the following cases.
	
	\textit{Case I : $d(u,c^{-1}(k))$ is even.}
	Since $S_n(3)$ is bipartite and color $k$ only appears in one partition, if $d(u,c^{-1}(k))$ is even, then $u,w\in \{v,y_1,y_2,\cdots,y_n\}$.
	Note that $v$ is the only vertex with $d(v,r)=1$ for all $r\in A$, so $v\notin\{u,w\}$. Let $u=y_i$ and $w=y_j$ with $i\ne j$. Since $a_c(u)=a_c(w)$, then $\{c(x_i),c(z_i)\}=\{c(x_j),c(z_j)\}$. If $c(z_i)=c(z_j)$, then $c(x_i)=c(x_j)$, which implies $i=j$. If $c(z_i)=c(x_j)$, then $c(z_i)\in A$, which implies $|\{c(x_i),c(z_i),c(x_j),c(z_j)\}|=1$ and thus $i=j$.
	
	\textit{Case II : $d(u,c^{-1}(k))$ is odd.}
	Let $u\in \{x_i,y_i,z_i\}$ and $w\in \{x_j,y_j,z_j\}$. If $d(u,v)=d(w,v)$, then $i\ne j$ and the different colors in $(a_i,b_i,c_i)$ and $(a_j,b_j,c_j)$ will distinguish $a_c(u)$ and $a_c(w)$. If $d(u,v)< d(w,v)$, then $u=x_i$ and $w=z_j$. This implies $c(z_j)=c(x_i)\in A$, thus $c(z_j)=c(x_j)$. Since $d(z_j,c^{-1}(k))=d(x_i,c^{-1}(k))=1$, then $c(y_j)=p$ and $d(z_j,r)=5>3\geq d(x_i,r)$ for every $r\in B$, a contradiction.
	
	We already prove that $c$ is a locating $k$ coloring of $S_n(3)$, which means $\chi_L(S_n(3))\leq k$, but $\chi_L(S_n(3))=p>k$, a contradiction. Therefore $p$ is the smallest integer such that $n\leq f(p)$, and thus (\ref{Sn3}) is proven.
	
	From (\ref{Sn3}), we have
	\begin{align*}
	\lim\limits_{n\to \infty} \frac{n}{f(p)}=1\quad	\Rightarrow \quad
	\lim\limits_{n\to \infty} \frac{n}{p^3/4}=1\quad \Rightarrow \quad
	\lim\limits_{n\to \infty} \frac{p}{\sqrt[3]{4n}}=1,
	\end{align*}
	therefore $\chi_L\left(S_n(3)\right) =(1+o(1)) \sqrt[3]{4n}$.
\end{proof}

\section*{Acknowledgment}
This research has been funded by the Indonesian Ministry of Research and Technology/National Agency of Research and Innovation under the World Class University (WCU) Program managed by Institut Teknologi Bandung.

\end{document}